\def\@tocline#1#2#3#4#5#6#7{\relax
  \ifnum #1>\c@tocdepth % then omit
  \else
    \par \addpenalty\@secpenalty\addvspace{#2}%
    \begingroup \hyphenpenalty\@M
    \@ifempty{#4}{%
      \@tempdima\csname r@tocindent\number#1\endcsname\relax
    }{%
      \@tempdima#4\relax
    }%
    \parindent\z@ \leftskip#3\relax \advance\leftskip\@tempdima\relax
    \rightskip\@pnumwidth plus4em \parfillskip-\@pnumwidth
    #5\leavevmode\hskip-\@tempdima
      \ifcase #1
       \or\or \hskip 1em \or \hskip 2em \else \hskip 3em \fi%
      #6\nobreak\relax
    \dotfill\hbox to\@pnumwidth{\@tocpagenum{#7}}\par
    \nobreak
    \endgroup
  \fi}
\setlist[itemize]{noitemsep} % spacing between items in lists
\newtheorem{theorem}{Theorem}[section]
\newtheorem{corollary}{Corollary}[section]
\newtheorem{proposition}{Proposition}[section]
\newtheorem{lemma}{Lemma}[section]
\theoremstyle{definition}
\newtheorem{definition}{Definition}[section]
\let\c@conjecture=\c@theorem
\let\c@corollary=\c@theorem
\let\c@proposition=\c@theorem
\let\c@lemma=\c@theorem
\let\c@definition=\c@theorem
\let\c@example=\c@theorem
\let\c@remark=\c@theorem
\let\c@equation\c@theorem
\let\c@question\c@theorem
\def\makeautorefname#1#2{\expandafter\def\csname#1autorefname\endcsname{#2}}
\newcommand{\ZZ}{\mathbb{Z}}
\newcommand{\RR}{\mathbb{R}}
\newcommand{\RP}{\mathbb{RP}}
\newcommand{\CP}{\mathbb{CP}}
\newcommand{\cS}{\mathcal{S}}
\newcommand{\cR}{\mathcal{R}}
\newcommand{\cB}{\mathcal{B}}
\newcommand{\cG}{\mathcal{G}}
\newcommand{\mgamma}{\mathcal{M}(\Gamma)}
\newcommand{\D}{\mathcal{D}}
\newcommand{\T}{\mathbb{T}}  
\newcommand{\x}{\mathbf{x}}
\newcommand{\y}{\mathbf{y}}
\newcommand{\xtilde}{\widetilde{\x}}
\newcommand{\ytilde}{\widetilde{\y}}
\DeclareMathOperator{\II}{I}
\DeclareMathOperator{\Ima}{Im}
\newcommand{\GG}{\mathbb{G}}
\newcommand{\cM}{\mathcal{M}}
\title{Moduli Spaces of Lagrangian Surfaces in $\CP^2$ obtained from Triple Grid Diagrams}
\author{Devashi Gulati}
\address{Department of Mathematics, University of Georgia, Athens, GA 30602}
\email{\href{gulati.devashi@gmail.com}{gulati.devashi@gmail.com}}
\author{Peter Lambert-Cole}
\address{Department of Mathematics, University of Georgia, Athens, GA 30602}
\email{\href{plc@uga.edu}{plc@uga.edu}}
\begin{document}

\maketitle

\begin{abstract}
Links in $S^3$ as well as Legendrian links in the standard tight contact structure on $S^3$ can be encoded by grid diagrams.  These consist of a collection of points on a toroidal grid, connected by vertical and horizontal edges.  Blackwell, Gay and second author studied triple grid diagrams, a generalization where the points are connected by vertical, horizontal and diagonal edges.  In certain cases, these determine Lagrangian surfaces in $\CP^2$.  However, it was difficult to construct explicit examples of triple grid diagrams, either by an approximation method or combinatorial search.  We give an elegant geometric construction that produces the moduli space of all triple grid diagrams.  By conditioning on the abstract graph underlying the triple grid diagram, as opposed to the grid size, the problem reduces to linear algebra and can be solved quickly in polynomial time. 
\end{abstract}

\section{Introduction}

From the introduction of bridge position for surfaces \cite{MZ-bridge_2017}, it was immediately clear that one could define and search for `triple grid diagrams' for surfaces in $\CP^2$ (with respect to the genus 1 trisection), just as there exist grid diagrams for knots and links in $S^3$ and lens spaces (with respect to the genus 1 Heegaard splitting).  A grid diagram for a link is a collection of points on the genus-1 Heegaard torus for $S^3$, connected by vertical and horizontal edges on the torus.  Moreover, rotating a grid diagram by $45^{\circ}$ produces a front projection of a Legendrian link in the standard tight contact structure on $S^3$.  Every smooth link and every Legendrian link can be described by some grid diagram.  A triple grid diagram is an analogous description of a surface in bridge position in $\CP^2$, consisting of points on the torus and three collections of edges, having vertical, horizontal and diagonal slope.  Each of the three pairs of edges produces a standard grid diagram of a Legendrian link in $S^3$, hence the term `triple' grid diagram.

However, it proved difficult to construct interesting triple grid diagrams beyond some basic families of examples, either by an approximation method or by a combinatorial search.  Adding a third slope condition on the points is highly constraining, therefore it is hard to approximate an arbitrary surface in bridge position by a triple grid diagram.  In a different direction, the points of a grid diagram on an $n \times n$-grid can be encoded by a pair of permutations in the symmetric group $S_n$.  One can then attempt to search through all pairs of permutations and select  pairs that satisfy a third slope condition.  This approach would take $O((n!)^2)$-time to produce all triple grid diagrams of grid size $n$, which is computationally intractable for large $n$.  For small $n$, there are few distinct examples up to symmetry.

In this paper, we give an elegant geometric construction that produces all triple grid diagrams.  The insight is to conduct the search by conditioning on the abstract, Tait-colored cubic graph $\Gamma$ one is trying to realize by a triple grid diagram, as oppposed to conditioning on the grid size $n$.  From this perspective, the problem is purely linear algebra and can be solved quickly in polynomial time.  In fact, we can prove the existence of triple grid diagrams purely by counting dimensions of intersecting linear subspaces.

\begin{theorem}\label{thm:main_thm}
Let $\Gamma$ be an abstract Tait-colored cubic graph with $2b$ vertices.  The moduli space $\mgamma$ of geometric triple grid diagrams with graph-type $\Gamma$ is a smooth manifold.

\begin{enumerate}
    \item If $b = 1$, then the moduli space is empty.
    
    \item If $b = 2$, then the moduli space is 2-dimensional and consists of $\T^2$-translates of a single element, the $\mathbb{RP}^{2}$-graph in Figure \ref{fig:b2_example}.

    \item If $b \geq 3$, then $\mgamma$ is nonempty and has dimension $d \geq b$.
\end{enumerate}

\end{theorem}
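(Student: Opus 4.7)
My plan is to translate the entire problem into linear algebra on the torus. Assign coordinates $(x_v, y_v) \in \T^2$ to each vertex $v \in V(\Gamma)$, so the full configuration space is $\T^{4b}$. The Tait coloring imposes $3b$ linear equations---$x_u = x_v$ for each vertical edge $\{u,v\}$, $y_u = y_v$ for each horizontal edge, and $x_u - y_u = x_v - y_v$ for each diagonal edge---which cut out a closed linear subgroup $L(\Gamma) \subseteq \T^{4b}$. Then $\mgamma$ is the open locus in $L(\Gamma)$ where all $2b$ vertex coordinates are distinct, and since an open subset of a closed linear subgroup of a torus is automatically a smooth manifold, smoothness comes for free, along with the a priori bound $\dim L(\Gamma) \geq 4b - 3b = b$.

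For part (1), the unique Tait-colored cubic graph with $b = 1$ has two vertices joined by three parallel edges of distinct colors, and the three defining equations immediately force $p_1 = p_2$, so all of $L(\Gamma)$ lies in the collision locus and $\mgamma = \emptyset$. For part (2), I would solve the $6 \times 8$ system explicitly for the $\RP^2$-graph (namely $K_4$ with its unique Tait 3-edge-coloring): after normalizing $p_1 = (0,0)$ by translation, the equations uniquely determine $\{p_2, p_3, p_4\} = \{(0, \tfrac{1}{2}), (\tfrac{1}{2}, 0), (\tfrac{1}{2}, \tfrac{1}{2})\}$, and since all four points are distinct, $\mgamma$ is exactly the $2$-dimensional $\T^2$-orbit of this configuration.

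Part (3) is where the real content lies. The dimension bound $d \geq b$ is again immediate from the linear-algebra setup, so the main obstacle is nonemptiness: I must show $L(\Gamma)$ is not contained in any collision locus $\{p_u = p_v\}$. Each such locus is defined in $L(\Gamma)$ by at most two additional linear equations and therefore has codimension at most $2$. If no collision is \emph{forced}---that is, if for every pair $u \neq v$ at least one of $x_u = x_v$, $y_u = y_v$ is not already implied by the defining equations---then the finite union of the $\binom{2b}{2}$ collision loci is a proper closed subset of $L(\Gamma)$ and $\mgamma$ is a nonempty open dense complement.

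The hardest step will be ruling out forced collisions for $b \geq 3$. My plan is to give a combinatorial description of the kernel of the constraint matrix in terms of closed walks in $\Gamma$ that alternate between two of the three color classes: a forced equality $x_u = x_v$ corresponds to such an alternating cycle linking $u$ and $v$ in the vertical/horizontal/diagonal sub-matchings, and a forced collision requires two such compatible structures simultaneously. I would then argue that any $\Gamma$ carrying such a ``doubly rigid'' subconfiguration must split off a $b=1$ or $b=2$ piece, reducing to the previous cases and contradicting $b \geq 3$ (assuming connectedness). If this uniform graph-theoretic argument proves too delicate, I would fall back on an explicit construction for a spanning class of $\Gamma$ (iterated stabilizations of the $\RP^2$-graph) and extend by a generic perturbation within $L(\Gamma)$, using that $\dim L(\Gamma)$ grows linearly in $b$ while the codimension of each collision remains bounded by $2$.
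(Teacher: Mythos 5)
Your setup is the same as the paper's, just in different notation: the paper encodes a configuration as a $\T^2$-valued $0$-cochain, expresses the slope conditions as a linear subspace $S\subset C^1(\Gamma,\T^2)$ intersected with $\Ima d^*$, and gets the bound $\dim \geq (4b-2)+3b-6b+2 = b$ exactly as you do from $4b-3b$; your $b=1$ and $b=2$ computations also agree with the paper's Examples. However, your identification of $\mgamma$ as ``the open locus in $L(\Gamma)$ where all $2b$ vertex coordinates are distinct'' is too coarse. A geometric triple grid diagram requires each vertical, horizontal, and diagonal line to carry \emph{exactly zero or two} points, i.e.\ the moduli space is cut out by \emph{if and only if} conditions: whenever $e_{(i,k)}\notin E_{red}$ you must also exclude $x_i=x_k$ (and similarly for the other two colors). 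These are codimension-$\le 1$ loci, not the codimension-$\le 2$ collision loci $\{p_u=p_v\}$ you list, and they are the dominant part of the degeneracy locus $D$ in the paper. Your union of $\binom{2b}{2}$ collision sets therefore does not carve out $\mgamma$, and your nonemptiness argument addresses the wrong (and easier) family of bad sets.

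The deeper problem is that the load-bearing step of part (3) --- showing that for $b\ge 3$ no degeneracy $x_i=x_k$ (for a non-red pair) is \emph{forced} on the solution space, so that $D$ meets it in positive codimension --- is present in your write-up only as a plan with two untested strategies. The proposed combinatorial claim that a ``doubly rigid subconfiguration must split off a $b=1$ or $b=2$ piece'' is not established, and the fallback via stabilizations of the $\RP^2$-graph is not carried out; note also that a disconnected $\Gamma$ containing a theta component shows the statement needs connectedness or some hypothesis you haven't isolated, and that $L(\Gamma)$ is generally a disjoint union of torsion cosets (as your own $b=2$ computation shows, where one coset is entirely degenerate), so ``not forced on $L(\Gamma)$'' must be checked coset by coset. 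To be fair, the paper's own proof of Theorem \ref{thrm:moduli-dimension} disposes of this step with the one-line assertion that $D$ has codimension $1$, deriving everything else from Proposition \ref{prop:main-moduli-description} and Lemma \ref{lemma:m0-dimension}; you have correctly located the real content of the theorem, but neither your proposal nor a transcription of the paper's argument yet supplies it.
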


This theorem is a combination of Theorem \ref{thrm:moduli-dimension} and Examples \ref{ex:b1} and \ref{ex:b2}.

\subsection{Triple grid diagrams and Lagrangian surfaces}

One geometric motivation to study triple grid diagrams is that they determine Lagrangian surfaces in $\CP^2$.  A triple grid diagram $D$ determines three standard grid diagrams $\GG_{1},\GG_{2},\GG_{3}$.  Via a well-known method, grid diagrams determine Legendrian links in the standard tight contact structure $(S^3,\xi_{std})$.  

Start with $(\CP^2, \omega_{FS})$ and remove three Darboux balls.  The result is a symplectic 4-manifold $(X,\omega)$ with three boundary components $Y_1,Y_2,Y_3$, each with an inward-pointing Liouville vector field $\rho_i$.  The 1-form $\alpha_i = \omega(\rho_i,-)$ is a contact form for the standard tight contact structure $\xi_{std}$ on $S^3$.  This symplectic 4-manifold is a {\it strong symplectic cap} for the disjoint union of three copies of $(S^3,\xi_{std})$.  A {\it Lagrangian cap} is a surface $L$ that intersects the boundary transversely, such that $\omega|_L = 0$ and each Liouville vector field $\rho_i$ is tangent to $L$ in some neighborhood of the boundary.  Consequently, the link $\Lambda_i = L \pitchfork Y_i$ is a Legendrian link tangent to the contact structure $\xi_{std}$ along $Y_i$.

\begin{theorem}[\cite{BGLC_2023}]
\label{thrm:BGLC}
    A geometric triple grid diagram $D$ determines a Lagrangian cap $L(D)$ in $(X,\omega)$ for the three disjoint Legendrian links $\Lambda_1,\Lambda_2,\Lambda_3$ determined by the three induced grid diagrams.

    Furthermore, if each $\Lambda_i$ admits a Lagrangian filling in $(B^4,\omega_{std})$, then these fillings can be glued to the Lagrangian cap $L(D)$ to produce an embedded Lagrangian surface.
\end{theorem}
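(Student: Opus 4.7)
The plan is to construct the Lagrangian cap $L(D)$ by assembling three families of Lagrangian half-disks, one for each slope class of $D$, using the toric structure of $\CP^2$, and then to verify smoothness at the grid points and the Legendrian boundary condition.

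I would first set up the geometry using the standard toric $\T^2$-action on $(\CP^2,\omega_{FS})$, whose moment polytope is a triangle.  The central fiber of the moment map is a Lagrangian torus $\T^2$, which also serves as the Heegaard torus of the genus-1 trisection of $\CP^2$.  Each of the three trisection pieces is associated to one of the three slope classes on $\T^2$; in the piece associated to slope $i$, the direction of slope $i$ collapses along an invariant divisor, producing a foliation by Lagrangian disks whose boundaries are slope-$i$ curves on $\T^2$.  Removing a Darboux ball from the interior of each piece yields $(X,\omega)$ with three $(S^3,\xi_{std})$ boundary components and inward Liouville vector fields $\rho_i$.

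A geometric triple grid diagram $D$ places a finite set of points on $\T^2$ together with three families of straight arcs whose slopes are exactly the three collapsing directions above.  Each arc of slope $i$ bounds a canonical Lagrangian half-disk in the $i$-th trisection piece, obtained as a leaf of the corresponding Lagrangian disk foliation.  The candidate cap $L(D)$ is the union of the grid points, the three arc families on $\T^2$, and the three half-disk families.  I would verify via an explicit Darboux local model that the three half-disks meeting at each grid point glue to a smooth Lagrangian surface, possibly after a $C^\infty$-small Lagrangian perturbation or a Polterovich-style Lagrangian surgery at each corner.  Near each boundary component $Y_i$, the half-disks meet the removed Darboux ball in arcs tangent to $\rho_i$, so the boundary link is Legendrian; the standard correspondence between grid diagrams and Legendrian front projections identifies this link with $\Lambda_i$.

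For the second statement, suppose each $\Lambda_i$ bounds a Lagrangian filling $F_i \subset (B^4,\omega_{std})$.  By the Weinstein Lagrangian neighborhood theorem, a collar of $\Lambda_i \subset Y_i$ in $X$ is symplectomorphic to a collar of the Legendrian boundary of $F_i$ in $B^4$, and the tangency of $L(D)$ to $\rho_i$ together with the outward-Liouville tangency of $F_i$ near its boundary guarantees a smooth match.  Gluing the three fillings to $L(D)$ produces a closed embedded Lagrangian surface in $(\CP^2,\omega_{FS})$.  The hardest step is the smoothness at grid points, where three Lagrangian half-disks with pairwise distinct tangent 2-planes meet along a single vertex in $(X,\omega)$; resolving this three-way singularity is the technical heart of the proof.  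A secondary concern is that distinct Lagrangian half-disks inside a single trisection piece must not intersect one another, which should follow from the \emph{geometric} hypothesis (rather than purely combinatorial data) built into the definition of a geometric triple grid diagram.
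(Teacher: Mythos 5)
First, a point of order: the paper does not prove this statement --- it is quoted from \cite{BGLC_2023} --- so I am comparing your proposal against the construction in that reference rather than against an in-paper argument. Your high-level strategy (use the toric geometry of $\CP^2$, build the cap from Lagrangian pieces attached to the three families of arcs, identify the boundary with the Legendrian links of the three grid diagrams, then glue fillings via Weinstein neighborhoods) is the right one, and the second half of your argument, gluing Lagrangian fillings along Legendrian boundaries, is standard and essentially correct. But the foundational geometric claim is wrong. The disks you obtain by letting the slope-$i$ circle direction collapse into a toric divisor are the meridian disks of the corresponding solid torus, and these are \emph{symplectic}, not Lagrangian: in an affine chart the model is $\{w_1 = e^{ic}\} \times \{|w_2| \le 1\}$, on which $\omega_{std}$ restricts to the area form of the $w_2$-disk. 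The Lagrangian disks with straight boundary on the Clifford torus are instead the thimbles through the three toric \emph{fixed points}, e.g.\ $\{w_2 = e^{ic}\,\overline{w}_1\}$, which lie over the segments from the barycenter of the moment triangle to its vertices, not to its edges. This is precisely why the three Darboux balls in $(X,\omega)$ are removed around the three fixed points: truncating each thimble there produces a Lagrangian annulus whose outer boundary is a Legendrian unknot in $(S^3,\xi_{std})$, which is the seed of the cap. A construction based on the divisor-collapsing disks cannot produce a Lagrangian cap at all.

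The second gap is the one you flag yourself, and it is not a removable technicality. The set you propose as $L(D)$ --- the grid points, the three arc families \emph{on} $\T^2$, and three families of half-disks --- is not a $2$-manifold at a grid point: the link of such a point contains three arcs in $\T^2$ plus the boundaries of three transverse half-disks, which is a graph, not a circle, and Polterovich surgery is defined for two Lagrangians meeting transversely at a point, not for a three-sheeted corner along a vertex. In the bridge-trisection formalism of \cite{BGLC_2023} the surface meets the central torus transversely in the $2b$ points only; the colored arcs are \emph{shadows} of tangle arcs pushed into the three $3$-dimensional handlebodies and are not subsets of $L(D)$. The actual proof supplies an explicit Lagrangian local model at each bridge point interpolating between the three truncated thimbles, and uses the triviality of the tangles and disk-tangles (guaranteed by the ``exactly two points per line'' condition) to rule out the self-intersections you defer to ``the geometric hypothesis.'' As written, your proposal identifies where the difficulty lives but does not construct the cap.
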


\subsection{Fillability obstructions}

A triple grid diagram $D$ determines a Lagrangian cap $L(D)$ in $(CP^2 \setminus 3 B^4,\omega)$ for a triple $\Lambda_{\alpha \beta} \cup \Lambda_{\beta \gamma} \cup \Lambda_{\gamma \alpha}$ of Legendrian links.  If each Legendrian link admits a Lagrangian filling in $(B^4,\omega_{std})$, then these fillings can be glued to the Lagrangian cap to obtain an embedded, Lagrangian surface in $\CP^2$.  Suppose that each Legendrian link admits a Lagrangian filling by disks.  The Euler characteristic and orientability of the resulting closed surface $L(D)$ is determined by the abstract, Tait-colored cubic graph $\Gamma$.  

The torus $\T^2$ is the only orientable surface that admits a Lagrangian embedding in $\CP^2$.  For nonorientable surfaces, the real projective plane (realized as the set of real points in $\CP^2$) is Lagrangian, but Shevchishin \cite{Shevchishin} and Nemirovski \cite{Nemirovski} showed that the Klein bottle does not admit a Lagrangian embedding.  Furthermore, the combined work of Givental \cite{Givental}, Audin \cite{Audin}, and Dai, Ho, and Li \cite{DHL} establishes that $\#^k \RP^2$ admits a Lagrangian embedding into $\CP^2$ if and only if $k \equiv 2 \text{ (mod 4)}$ and $k \neq 2$ or if $k \equiv 1 \text{ (mod 4)}$.

Therefore, we can obstruct the existence of Lagrangian fillings by the following method.  Take an abstract, Tait-colored cubic graph $\Gamma$ that determines a nonorientable surface with nonpositive Euler characteristic either identically 0 or equal to $2,3 \text{ (mod 4)}$.  Construct the moduli of triple grid diagrams modeled on $\Gamma$ and consider a given triple grid diagram $D$.  If at least two of the resulting Legendrian links admit Lagrangian fillings by disks, then the third Legendrian link cannot.

\subsection{Extension to almost-toric fibrations}

A {\it Markov triple} is a solution $(a,b,c)$ to the Diophantine equation
\[a^2 + b^2 + c^2 = 3abc\]
The base example is $(1,1,1)$ and any other Markov triple can be obtained through a sequence of `mutations'.

Given a Markov triple, the toric structure on the weighted projective plane $\CP^2 (a^2,b^2,c^2)$ can be deformed to an almost-toric structure.  Vianna \cite{Vianna}, for example, showed that the central fiber $T_{a,b,c}$ of such an almost-toric fibration is an exotic monotone Lagrangian torus.  This torus bounds three solid tori $S^1 \times \mathbb{D}^2$ corresponding to three compressing slopes $\alpha,\beta,\gamma \in H_1(T^2)$ satisfying the modified cocycle equation
\[a^2 \alpha + b^2 \beta + c^2 \gamma = 0\]
A neighborhood of the union $\cS = H_{\alpha} \cup H_{\beta} \cup H_{\gamma}$ admits a symplectic structure with three strongly concave boundary components.  Each is a lens space with a universally tight contact structure that can be symplectically filled with a rational homology 4-ball.  Although not stated explicitly in \cite{BGLC_2023}, Theorem \ref{thrm:BGLC} can be immediately extended to triple grid diagrams defined with respect to a triple of slopes $(\alpha,\beta,\gamma)$ defined by a Markov triple.  It may therefore be interesting to study fillable triple grid diagrams with respect to all such slope triples and almost-toric fibrations of $\CP^2$.  We remark that in our method for computing the moduli space $\cM(\Gamma)$ can be immediately adapted to compute the moduli spaces with respect to any slope triple.

\subsection{Acknowledgements}

We would like to thank David Gay, Sarah Blackwell and Jason Cantarella for helpful conversations.

\section{Terminology and Definitions}

\begin{definition}
     A \textit{Triple Grid (combinatorial) diagram} $\D(n,b)$ is a grid diagram of size $n$ with additional information. It consists of the following :
    \begin{enumerate}
     \item a grid on the torus $T^2 = \RR^2/\ZZ^2$ consisting of three sets of lines: 
     \begin{enumerate}
         \item $n$ vertical lines $\left\{ x = \frac{k}{n} : 1 \leq k \leq n \right\}$, colored red by convention,
         \item $n$ horizontal lines $\left\{ y = \frac{k}{n} : 1 \leq k \leq n\right\}$, colored blue by convention, and
         \item $n$ diagonal lines (of slope $-1$) $\left\{x + y = \frac{k}{n} : 1 \leq k \leq n \right\}$, colored green by convention.
     \end{enumerate}
     \item $2b$ points in the complement of the $3n$ grid lines, such that in the region between any pair of adjacent lines of the same slope, there are exactly zero or two points.
 \end{enumerate}
    It can be decomposed into three grid diagrams of size $b$ each.
\end{definition}

\begin{definition}
    A Triple Grid diagram of size $b$ is said to be \textit{orientable} if there exists a partition of the $2b$ points into $b$ $X$s and $b$ $O$s such that each vertical line, a horizontal line, and diagonal line has one $X$ and one $O$. 
\end{definition}

\begin{lemma}
    A Triple Grid diagram being orientable is equivalent to the corresponding cubic Tait-colored graph being bipartite. 
\end{lemma}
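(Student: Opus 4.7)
The plan is to show that the definition of orientability translates directly into the definition of bipartiteness once one identifies the graph structure correctly.

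First I would unpack the correspondence between triple grid diagrams and Tait-colored cubic graphs. Given a triple grid diagram $\D(n,b)$, the associated graph $\Gamma$ has the $2b$ points as vertices; two vertices are connected by a red (resp.\ blue, green) edge precisely when they are the two points lying in a common vertical strip (resp.\ horizontal strip, diagonal strip). Since each non-empty strip contains exactly two of the $2b$ points, each vertex is incident to exactly one red, one blue, and one green edge, so $\Gamma$ is cubic and Tait-colored by $\{\text{red, blue, green}\}$.

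Next I would translate the orientability condition into a statement about $\Gamma$. An orientable labeling is a function $c \colon V(\Gamma) \to \{X,O\}$ such that every vertical line, every horizontal line, and every diagonal line contains one vertex labeled $X$ and one labeled $O$. Because each edge of $\Gamma$ is, by construction, the pair of points sharing a strip of some slope, this condition is equivalent to requiring that for every edge of $\Gamma$ (of any color), the two endpoints receive opposite labels. That is precisely a proper $2$-coloring of $V(\Gamma)$, which exists if and only if $\Gamma$ is bipartite.

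For the converse direction, given a bipartition $V(\Gamma) = A \sqcup B$ with every edge crossing the partition, label the vertices in $A$ by $X$ and those in $B$ by $O$. Then on any strip containing exactly two points, those points form an edge of $\Gamma$, hence lie in different parts of the bipartition and receive opposite labels. This produces the required $X/O$ partition and establishes orientability.

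The main (minor) obstacle is simply making the graph-theoretic dictionary explicit: one needs the observation that the graph edges are exactly the pairs of points sharing a line of some slope, so that the ``one $X$ and one $O$ per line'' condition is verbatim the ``opposite labels across each edge'' condition. After that identification, the equivalence is tautological.
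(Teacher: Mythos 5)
Your proposal is correct and follows essentially the same route as the paper's proof: both directions amount to unpacking the dictionary identifying edges of $\Gamma$ with pairs of points sharing a line of a given slope, so that the ``one $X$ and one $O$ per line'' condition becomes precisely a proper $2$-coloring of $\Gamma$. Your write-up is somewhat more explicit about this identification than the paper's, but the argument is the same.
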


\begin{proof}
The proof is by definition.
If $\Gamma$ is bipartite, then we can label the vertices in one partition by $X$s and the other by $O$s. As $\Gamma$ is cubic and Tait-colored, every (red, blue, or green) edge joins an $X$ to an $O$. So, in the corresponding Triple Grid diagrams, this corresponds to 2 points on a line (horizontal, vertical, or diagonal respectively) with one being labeled with $X$ and the other with $O$ i.e. being orientable.
Let $\mathcal{D}$ be an orientable Triple grid diagram. Then we can obtain $\Gamma$ by connecting points with colored edges and forgetting the torus. Since each edge connects an $X$ with an $O$, $\Gamma$ is partitioned into $X$s and $O$s. 
\end{proof}

\begin{definition}
    A \textit{grid diagram} comprises a $n \times n$ grid on a torus with the restriction that each row and column between the grid lines has exactly zero dots or two dots. 
\end{definition}

Note that we are allowing empty rows and columns, unlike those in grid homology. This is because empty rows and columns do not change the link in the grid diagram and are needed to satisfy the diagonal condition in the corresponding Triple grid diagram.

\begin{figure}
    \centering
    \includegraphics[width=10cm]{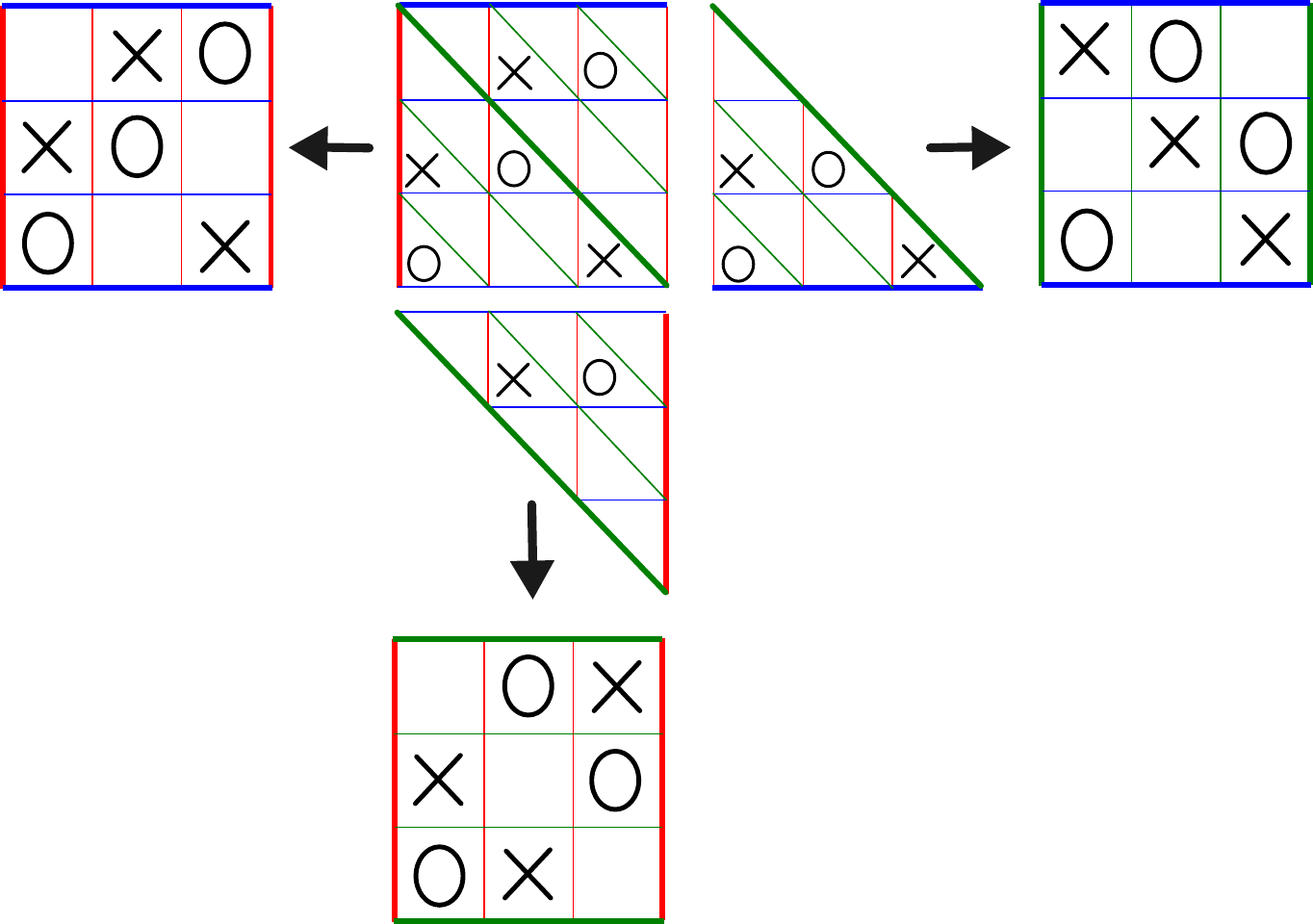}
    \caption{The Triple Grid diagram corresponding to $K_{3,3}$ and the three Grid diagrams it encodes. The upper and lower triangles of the Triple Grid diagram have been repeated to show how to obtain the encoded Grid diagrams.} 
    \label{fig:TripleGrid_3Grid}
\end{figure}

\begin{comment}
    \begin{figure}
    \centering
    \def\svgwidth{0.3\textwidth}
    \import{figures/}{K_3,3.pdf_tex}
    \caption{K_3,3} 
    \label{fig:K_3,3}
\end{figure}
\end{comment}

\begin{comment}
\begin{definition}
    Let X be a closed, smooth, oriented 4-manifold. A $(g;k_1,k_2,k_3)$ \textit{trisection} $\mathcal{T}$ of $X$ is a decomposition $X = X_1 \cap X_2 \cap X_3$ such that 
    \begin{itemize}
        \item[1.] Each $X_{\lambda}$ is  a 4-dimensional handlebody of genus $k_{\lambda}$.
        \item[2.] Each $H_{\lambda} = X_{\lambda - 1} \cap X_{\lambda}$ is a 3-dimensional 1-handlebody of genus $g$.
        \item[3.] $\Sigma = X_1 \cap X_2 \cap X_3$ is a closed, oriented surface of genus $g$. 
    \end{itemize}
\end{definition}

\begin{figure}
    \centering
    \def\svgwidth{0.3\textwidth}
    \import{figures/}{CP2_Momentmap.pdf_tex}
    \caption{Moment map of $\CP^2$.} 
    \label{fig:CP2_Momentmap}
\end{figure}

Let $\mu:\CP2 \longrightarrow \RR^2$ given by \[\mu[z_1,z_2,z_3] =\left( \frac{3|z_1|}{|z_1|^2 + |z_2|^2 + |z_3|^2},\frac{3|z_2|}{|z_1|^2 + |z_2|^2 + |z_3|^2} \right) \]

\end{comment}

\begin{definition}
A triple grid diagram is {\it fillable} if each of the three grid diagrams it defines, determine front projections of Legendrian links in $(S^3,\xi_{std})$ that admit Lagrangian fillings in $(B^,\omega_{std})$.
\end{definition}

In order to determine whether a grid diagram satisfies the conditions for being a Triple grid diagram, we assume that it is a Triple grid diagram and proceeds to verify that each of the encoded grid diagrams satisfies the relevant restrictions.

This method of searching for combinatorial Triple Grid diagrams of orientable grid number and size $n$ has a complexity of $O((n!)^2)$ since it requires iterating through all possible grid diagrams. As such, we were only able to enumerate all possible Triple grid diagrams up to a size of $n = 9$. Moreover, the output Triple Grids have symmetry coming from translations, swapping of grids, and swapping of colors. This resulted in a lot of repetition in the output. Further, trying to narrow down the output to fillable Triple Grid diagrams was difficult as the output was the order of hundreds of diagrams.

In light of these challenges, we propose an alternative approach for investigating Triple grid diagrams. Specifically, we fix a cubic Tait-colored graph $\Gamma$ and consider all maps of the vertex set $V(\Gamma)$ into $\T^2$. This approach not only provides a more efficient means of studying the moduli space of Triple grid diagrams but also allows for the definition of Triple Grid moves. We are also able to search fillable Triple Grid diagrams more systematically.  

\begin{definition}  
A \textit{cubic (abstract) graph} $\Gamma = (V, E)$ is a graph such that each vertex has degree 3 i.e. 3 edges. A \textit{cubic graph} is said to be \textit{Tait-colored} if we can assign a color (red, blue, or green) to each edge of the graph in such a way that at each vertex, the 3 edges incident to that vertex is colored with exactly three distinct colors. 
\end{definition}

We can define Triple Grid graphs as immersions of Tait-colored cubic graphs into the central torus of $\mathbb{CP}^2$. As we are looking at these graphs from the viewpoint of surfaces, we need them to be well-defined. Therefore, we can not allow double points and overlaps on the central torus. Note that since each point on the central torus has coordinates, this is equivalent to a Geometric Triple Grid diagram. 

\begin{definition}
    Let $\phi : \Gamma \longrightarrow \mathbb{T}^2 = \mathbb{R}^2/\mathbb{Z}^2 $
    such that 
    \begin{itemize}
        \item[1.] $\phi(v_i) = \phi(v_j)$ if and only if $i=j$. 
        \begin{itemize}
            \item[2. a)] $x_i = x_j$ if and only if $ e_{(i,j)} \in E_{red}$. 
            \item[2. b)] $y_i = y_j $ if and only if $e_{(i,j)} \in E_{blue}$.
            \item[2. c)]   $ x_i + y_i = x_j + y_j $ if and only if $e_{(i,j)} \in E_{green}$.
        \end{itemize}
    \end{itemize}   
    Then $\phi(\Gamma)$ defines a Triple Grid diagram.
\end{definition}

\begin{figure}
    \centering
    \def\svgwidth{0.3\textwidth}
    \import{figures/}{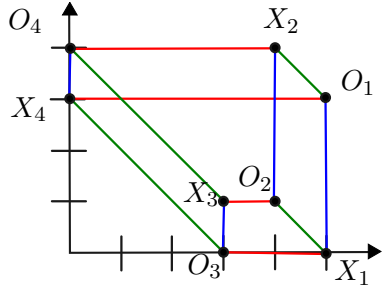}
    \caption{Geometric Triple Grid diagram of Torus with $b=4$.} 
    \label{fig:Geometric_t4}
\end{figure}
\begin{figure}
    \centering
    \def\svgwidth{0.3\textwidth}
    \import{figures/}{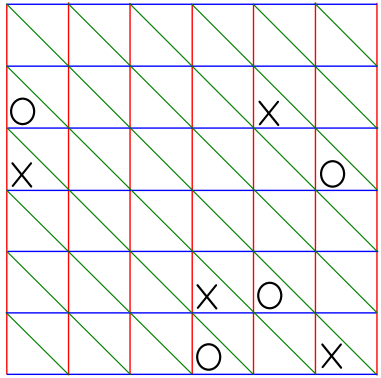}
    \caption{(Combinatorial) Triple Grid diagram of Torus with $b=4$.} 
    \label{fig:Geometric_t4}
\end{figure}

\begin{definition}
The \textit{Geometric Triple Grid diagram} $\Theta$ consists of $2b$ points(vertices/dots) in $\mathbb{T}^2 = \mathbb{R}^2/\mathbb{Z}^2$. These points $(x_i,y_i)_1^{2b}$ satisfy the following conditions:
\begin{itemize}
    \item For each line $x = c$, either there are either exactly two points on it or none.
    \item For each $y = c $, either there are either exactly two points on it or none. 
    \item For each line $x+y = c $, either there are either exactly two points on it or none.
\end{itemize}

The Triple Grid can be decomposed into three Grid Diagrams by thinking of a third coordinate $z = -x -y$. Then looking at $(x,y)$,$(y,z)$, and $(z,x)$ gives us the Grid diagrams making up the Triple Grid diagram.
\end{definition}

\begin{definition}
    The \textit{Geometric Grid diagram} consists of $2b$ points(vertices/dots) in $\mathbb{T}^2 = \mathbb{R}^2/\mathbb{Z}^2$. These points $(x_i,y_i)_1^{2b}$ satisfy the following conditions:
\begin{itemize}
    \item For each line $x = c$, either there are either exactly two points on it or none.
    \item For each $y = c $, either there are either exactly two points on it or none. 
\end{itemize}
\end{definition}

\section{Moduli Space of Geometric Triple Grids}

Let $\Gamma = (V,E)$ be a cubic Tait-colored graph.  Define
\[I(\Gamma) := \left\{ \phi: V(\Gamma) \rightarrow \T^2 \right\} \cong (\T^2)^{2b}\]
to be the set of maps of the vertex set $V(\Gamma)$ into $\T^2$.  We can then realize the moduli space of triple grid diagrams with 1-skeleton $\Gamma$ as a subspace of $I(\Gamma)$.  In particular,

\begin{align*}
    \mgamma := \left\{ \begin{tabular}{c|cl} 
         \multirow{3}{*}{$\phi : V(\Gamma) \longrightarrow \mathbb{T}^2 $} 
         & $x_i = x_j$ & if and only if $ e_{(i,j)} \in E_{red} $, \\
    &  $y_i = y_j $ & if and only if $e_{(i,j)} \in E_{blue}$, \\
    & $ x_i + y_i = x_j + y_j $& if and only if $e_{(i,j)} \in E_{green}  $ 
    \end{tabular} \right\} \\   
\end{align*}

The main result of this section is that this moduli space is nonempty if the vertex set is sufficiently large.

\begin{theorem}
\label{thrm:moduli-dimension}
    Let $\Gamma$ be a Tait-colored cubic graph with $2b$ vertices.  The moduli space $\mgamma$ of geometric triple grid diagrams with graph-type $\Gamma$ is a smooth manifold of dimension greater than or equal to $b$.
\end{theorem}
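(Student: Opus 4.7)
The plan is to identify $\mgamma$ as an open subset of a closed subgroup $V$ of the ambient torus $I(\Gamma) = (\T^2)^{2b}$, and then deduce both smoothness and the dimension bound by an elementary linear-algebra count.

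First I would encode the ``if'' direction of the defining conditions as a continuous group homomorphism. For each red edge $e_{(i,j)} \in E_{red}$ record the linear form $\ell^r_{ij}(\phi) = x_i - x_j$; for each blue edge, $\ell^b_{ij}(\phi) = y_i - y_j$; for each green edge, $\ell^g_{ij}(\phi) = (x_i + y_i) - (x_j + y_j)$. Since $\Gamma$ is Tait-colored and cubic on $2b$ vertices, each color class is a perfect matching of cardinality $b$, so the $3b$ linear forms assemble into a single homomorphism
\[
F : (\T^2)^{2b} \longrightarrow \T^{3b}.
\]
Let $V := \ker F$. As the kernel of a continuous homomorphism of tori, $V$ is automatically a closed abelian Lie subgroup; every such subgroup is a disjoint union of translates of a connected subtorus, so $V$ is a smooth manifold, and the rank--nullity theorem gives
\[
\dim V \;=\; 4b - \operatorname{rank}(F) \;\geq\; 4b - 3b \;=\; b.
\]

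Second I would argue that $\mgamma$ is an \emph{open} subset of $V$. The ``only if'' direction of the defining conditions asks that $x_i \neq x_j$ (respectively $y_i \neq y_j$, respectively $x_i + y_i \neq x_j + y_j$) for every pair $(i,j)$ not joined by an edge of the corresponding color, together with $\phi(v_i) \neq \phi(v_j)$ whenever $i \neq j$. Each of these is a strict inequality on a single group-theoretic linear form on $(\T^2)^{2b}$, so its failure locus is a closed subtorus. The union of the finitely many forbidden subtori is closed, and removing this union from $V$ yields exactly $\mgamma$. Hence $\mgamma$ is open in $V$, and therefore either empty or a smooth manifold of the same dimension as $V$, which is at least $b$. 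The empty case does occur (for instance when $b = 1$, where $V$ itself is already forced to lie in the diagonal $\phi(v_1) = \phi(v_2)$), but it is consistent with the stated conclusion.

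I expect the main subtlety --- rather than a genuine obstacle --- to be verifying that the ``if'' direction really imposes $3b$ equations rather than fewer after tacit identifications, so that rank-nullity produces the clean bound $\dim V \geq b$. The cubic/Tait-colored hypothesis is precisely what prevents collapse: each color class is a perfect matching on all $2b$ vertices, so the $3b$ forms $\ell^{\bullet}_{ij}$ are genuinely distinct functionals on $(\T^2)^{2b}$, and the codomain $\T^{3b}$ is not artificially inflated. Non-emptiness for $b \geq 3$ and the explicit descriptions for $b = 1, 2$ are complementary statements requiring the examples that follow the theorem rather than this dimension count.
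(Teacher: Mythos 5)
Your argument is correct and is essentially the paper's own proof in different clothing: the paper splits $C^0(\Gamma,\T^2) = \ker d^* \oplus \operatorname{Im} d^*$ and bounds $\dim(\operatorname{Im} d^* \cap S) \geq (4b-2) + 3b - 6b = b-2$ by intersecting subspaces in $C^1(\Gamma,\T^2)$, which is exactly your rank--nullity count $\dim\ker F \geq 4b - 3b = b$ for the composite map, since $\ker F = (d^*)^{-1}(S)$; the removal of the degeneracy locus as a finite union of closed subtori is likewise identical to the paper's Proposition on $\mgamma = (\T^2 \oplus \cM_0(\Gamma)) \setminus D$. The one caveat you flag --- that openness in $V$ only gives the dimension bound when $\mgamma$ is nonempty --- is a gap the paper shares (it is patched there only by the explicit examples for small $b$), so your proposal matches the paper's level of rigor.
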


To prove this theorem, we adopt a  more `linear' perspective and alternately describe $I(\Gamma)$ and $\mgamma$ in terms of $\T^2$-valued cochains on $\Gamma$.  We first identify $I(\Gamma)$ with a subspace of $C^*(\Gamma,\T^2)$.  The slope restrictions on the edges can be expressed in terms of a linear subspace $S \subset C^1(\Gamma,\T^2)$ and the nondegeneracy conditions on the vertices can be expressed in terms of a union of hyperplanes $D \subset C^0(\Gamma,\T^2)$.  Then $\mgamma$ corresponds to the subset of $I(\Gamma)$ contained in $S$ but in the complement of $D$.

\subsection{The space $\II(\Gamma)$}

\begin{lemma}
\label{lemma:gamma-immersions}
There are identifications
\[\II(\Gamma) = C^0(\Gamma,\T^2) = \ker d^* \bigoplus \Ima d^* \subset C^0(\Gamma,\T^2) \times C^1(\Gamma,\T^2) \]

\end{lemma}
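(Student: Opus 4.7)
The equality $\II(\Gamma) = C^0(\Gamma, \T^2)$ is tautological: by definition, a configuration $\phi : V(\Gamma) \to \T^2$ is exactly a $\T^2$-valued 0-cochain on $\Gamma$. The substantive content of the lemma is the identification of $C^0$ with the subgroup $\ker d^* \oplus \Ima d^* \subset C^0 \times C^1$.

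For this, I plan to take $d^* : C^0(\Gamma, \T^2) \to C^1(\Gamma, \T^2)$ to be the coboundary operator, $(d^*\phi)(e) = \phi(\mathrm{head}(e)) - \phi(\mathrm{tail}(e))$. Its kernel $\ker d^* \subset C^0$ consists of cochains that are constant on each connected component of $\Gamma$, so it is isomorphic to $(\T^2)^{\pi_0(\Gamma)}$ and in particular a (connected) subtorus. Its image $\Ima d^* \subset C^1$ is the continuous homomorphic image of a torus, hence a closed connected subgroup, i.e., a subtorus of $C^1$. Under the natural factor inclusions, $\ker d^* \subset C^0 \times \{0\}$ and $\Ima d^* \subset \{0\} \times C^1$ lie in complementary summands of $C^0 \times C^1$, so their sum is automatically an internal direct sum $\ker d^* \oplus \Ima d^* \subset C^0 \times C^1$.

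The core step is to exhibit the isomorphism $C^0 \cong \ker d^* \oplus \Ima d^*$, which amounts to splitting the short exact sequence of tori
\[
0 \longrightarrow \ker d^* \longrightarrow C^0(\Gamma, \T^2) \xrightarrow{\, d^* \,} \Ima d^* \longrightarrow 0.
\]
I would argue this via Pontryagin duality: the dual sequence is a short exact sequence of finitely generated free abelian groups (the character lattices of the three tori), and such a sequence splits since $\mathbb{Z}^k$ is projective. Dualizing back yields a continuous section $s : \Ima d^* \to C^0$, and then $\phi \mapsto (\phi - s(d^*\phi), \, d^*\phi)$ is the desired isomorphism, with inverse $(\phi_0, \beta) \mapsto \phi_0 + s(\beta)$.

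The principal obstacle is the splitting step itself: it is not automatic for general surjections of compact abelian Lie groups (e.g., the squaring map $\T \to \T$ has non-split kernel $\mathbb{Z}/2$), but is guaranteed here because all three terms in the sequence are connected tori. Alternatively, one can construct $s$ explicitly from the observation that $d^* : \mathbb{Z}^V \to \mathbb{Z}^E$ satisfies $d^*(\mathbb{Z}^V) = d^*(\mathbb{R}^V) \cap \mathbb{Z}^E$---any integer exact 1-cochain integrates along paths in $\Gamma$ to an integer 0-cochain---so a rational complement of $\ker d^*_{\mathbb{R}}$ in $\mathbb{R}^V$ may be chosen compatibly with the integer lattice, and descends to a group-level splitting on the torus quotient.
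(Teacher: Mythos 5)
Your proposal is correct, and it follows the same basic decomposition as the paper: identify $\II(\Gamma)$ with $C^0(\Gamma,\T^2)$ tautologically, then split $C^0$ as $\ker d^*$ plus a complement mapping isomorphically to $\Ima d^*$. The difference is in how the splitting is justified. The paper simply invokes ``the first isomorphism theorem for linear maps'' and writes down the pair $(X,W)$ with $W(e_{ij}) = \vec{x}_j - \vec{x}_i$, implicitly treating $\T^2$-valued cochains as if they formed a vector space; the first isomorphism theorem alone only gives $C^0/\ker d^* \cong \Ima d^*$, not the direct-sum decomposition. You correctly identify that the splitting of the extension
\[
0 \longrightarrow \ker d^* \longrightarrow C^0(\Gamma,\T^2) \longrightarrow \Ima d^* \longrightarrow 0
\]
is the real content and that it is not automatic for compact abelian groups (your squaring-map example is apt), and you supply two valid justifications: Pontryagin duality reducing to projectivity of the free character lattices, or the explicit observation that $d^*(\ZZ^V) = d^*(\RR^V)\cap \ZZ^E$ (an integral exact cochain integrates along paths to an integral potential), so the lattice-level sequence splits and descends to the tori. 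Either argument closes a step the paper leaves implicit, at the cost of more machinery than the paper's one-line appeal; both routes land on the same isomorphism $\phi \mapsto (\phi - s(d^*\phi),\, d^*\phi)$.
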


\begin{proof}

    Let $\phi \in \II(\Gamma)$ be a map. It assigns a point in $\T^2$ to each vertex
    \begin{align*}
        \phi(v_i) &= \Vec{x_i} \in \T^2 
    \end{align*}
    On the other hand, a 0-cochain in $C^0(\Gamma,\T^2)$ is by definition a homomorphism $\Tilde{X}:C_0(\Gamma,\T^2) \longrightarrow \T^2$.  Let $x_0 \in \T^2$ be a fixed basepoint.  Then there is a basis for $C_0(\Gamma,\T^2)$ consisting of the 0-chains that assigns $x_0$ to each vertex.  Consequently, we can identify the map $\phi$ with the 0-cochain that sends the $v_i$ 0-chain to $\phi(v_i) \in \T^2$.  
    
    The second identification is given by the first isomorphism theorem for linear maps.  Explicitly, we can identify $\phi$ with a pair $(X,W)$ such that 
    \begin{align*}
        X:\{v_i\} &\longrightarrow \T^2 & W:\{e_{ij}\} &\longrightarrow \T^2 \\
        X(v_i) &= \Vec{x_i} & 
        W(e_{ij}) &= \Vec{x_j} - \Vec{x_i} 
    \end{align*}
\end{proof}
%Let $\Tilde{X} \in C^0(\Gamma,\T^2)$ be a 0-cochain, which by definition is a homomorphism $\Tilde{X}:C_0(\Gamma,\T^2) \longrightarrow \T^2$.    The set of vertices $\{v_i\}$ form a basis for $C_0(\Gamma,\T^2)$.  So defining an embedding map on the vertex set, $X$, defines a co-chain map $\Tilde{X}$ in $C^0(\Gamma,\T^2)$ and vice-versa. Therefore, we can think of $\II(\Gamma) = C^0 (\Gamma,\T^2)$, giving $\II(\Gamma)$ the structure of $C^0 (\Gamma,\T^2)$.

   %$C^0(\Gamma,\T^2)$ and $\ker d^* \bigoplus \Ima d^*$ are isomorphic by the exact sequence above.

%By definition of co-homology, $d^* (X) = W$. 

%Let $(\Tilde{X},\Tilde{W}) \in (\ker d^* \bigoplus \Ima d^* ) \subset ( C^0(\Gamma,\T^2) \times C^1(\Gamma,\T^2) )$.
% The pair $(\Tilde{X},\Tilde{W})$ then corresponds to one map, $\phi = (X,W)$. Therefore we can also think of immersions as a pair of co-chain maps. In this manner, we have $\II(\Gamma) = \ker d^* \bigoplus \Ima d^* \subset C^0(\Gamma,\T^2) \times C^1(\Gamma,\T^2)$. 

The kernel of $d^*$ is precisely the constant 0-cochains and can be identified with $\T^2$.  Geomtrically, this summand correponds to translations of $\phi$ in $\T^2$.  Passing to homology, we have the diagram

% https://q.uiver.app/#q=WzAsNSxbMCwwLCJDXjAgKFxcR2FtbWEsIFxcbWF0aGJie1R9XjIpIl0sWzEsMCwiQ14xIChcXEdhbW1hLCBcXG1hdGhiYntUfV4yKSJdLFsyLDAsIjAiXSxbMSwxLCJIXjEoXFxHYW1tYSxcXG1hdGhiYntUfV4yKSJdLFswLDFdLFswLDEsImReKiIsMCx7InN0eWxlIjp7ImhlYWQiOnsibmFtZSI6ImVwaSJ9fX1dLFsxLDJdLFsxLDMsImleKiJdXQ==
\[\begin{tikzcd}
	{C^0 (\Gamma, \mathbb{T}^2)} & {C^1 (\Gamma, \mathbb{T}^2)} & 0 \\
	{} & {H^1(\Gamma,\mathbb{T}^2)}
	\arrow["{d^*}", two heads, from=1-1, to=1-2]
	\arrow[from=1-2, to=1-3]
	\arrow["{i^*}", from=1-2, to=2-2]
\end{tikzcd}\]
and exactness implies that $\Ima d^* = \ker i^* $.

%This is useful for explicit calculations of $\mgamma$. Note that if $d^* (X_1) = W =d^* (X_2)  $, then $X_1$ and $X_2$ differ only by translations as $\ker d^* \cong \T^2$.
\begin{corollary} 
\label{cor:gamma-immersions}
There is an identification
    \[\II(\Gamma) = \T^2 \oplus \ker i^*\]
\end{corollary}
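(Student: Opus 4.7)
The plan is to assemble the corollary directly from \autoref{lemma:gamma-immersions} together with the short exact sequence of cochains displayed just before the statement. By \autoref{lemma:gamma-immersions}, I already have a decomposition
\[
\II(\Gamma) = \ker d^* \,\oplus\, \Ima d^*,
\]
so the work is entirely to identify the two summands with the pieces $\T^2$ and $\ker i^*$ appearing in the corollary.

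First I would identify $\ker d^*$ with $\T^2$. Under the bijection in \autoref{lemma:gamma-immersions}, a pair $(X,W)$ lies in $\ker d^*$ precisely when $W(e_{ij}) = \vec{x_j}-\vec{x_i} = 0$ for every edge $e_{ij}$, i.e.\ when $X$ is constant on each connected component of $\Gamma$. Since a triple grid diagram with graph $\Gamma$ is only interesting when $\Gamma$ is connected (and in any case the constant cochains form the diagonal copy of $\T^2$), the kernel of $d^*$ is canonically $\T^2$, representing the global translation freedom of $\phi$ in the torus.

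Second I would identify $\Ima d^*$ with $\ker i^*$ using the cochain complex of $\Gamma$. Because $\Gamma$ is a graph, there are no $2$-cells and the cochain complex truncates: $d^1 : C^1(\Gamma,\T^2) \to C^2(\Gamma,\T^2) = 0$. Hence $H^1(\Gamma,\T^2) = C^1(\Gamma,\T^2)/\Ima d^*$, and the map $i^*$ appearing in the diagram is precisely this quotient projection. Exactness of
\[
C^0(\Gamma,\T^2) \xrightarrow{\; d^*\;} C^1(\Gamma,\T^2) \xrightarrow{\; i^*\;} H^1(\Gamma,\T^2) \longrightarrow 0
\]
at $C^1(\Gamma,\T^2)$ then gives $\ker i^* = \Ima d^*$, exactly as the paragraph preceding the corollary observes.

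Combining the two identifications in the decomposition from \autoref{lemma:gamma-immersions} yields the claimed splitting $\II(\Gamma) = \T^2 \oplus \ker i^*$. There is no real obstacle here: the statement is essentially a bookkeeping consequence of \autoref{lemma:gamma-immersions} together with the vanishing of $C^2(\Gamma,\T^2)$. The only mild subtlety worth flagging in the write-up is the connectedness assumption on $\Gamma$ needed to get $\ker d^* \cong \T^2$ rather than a higher power.
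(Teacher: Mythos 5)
Your proposal is correct and follows essentially the same route as the paper: the text immediately preceding the corollary identifies $\ker d^*$ with the constant cochains (the $\T^2$ of translations) and invokes exactness of $C^0 \to C^1 \to H^1 \to 0$ to get $\Ima d^* = \ker i^*$, exactly as you do. Your added remark about needing $\Gamma$ connected for $\ker d^* \cong \T^2$ (rather than a power of $\T^2$) is a reasonable point of care that the paper leaves implicit.
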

Denote $I_0(\Gamma) \coloneqq \Ima d^* \cong \ker i^*$.

\begin{comment}
Recall that 

\begin{align*}
    \mgamma := \left\{ \begin{tabular}{c|cl} 
         \multirow{3}{*}{$\phi : \Gamma \longrightarrow \mathbb{T}^2 $} 
    & $x_i = x_j$ & if and only if $ e_{(i,j)} \in E_{red} $, \\
    &  $y_i = y_j $ & if and only if $e_{(i,j)} \in E_{blue}$, \\
    & $ x_i + y_i = x_j + y_j $& if and only if $e_{(i,j)} \in E_{green}  $ 
    \end{tabular} \right\} \\   
\end{align*}
\end{comment}

\subsection{Slope Restrictions}

The slope restrictions on a triple grid diagram can be reinterpreted in terms of a linear subspace in $C^1(\Gamma,\T^2)$.

The restriction on the coordinates that $x_i= x_j$ if $ e_{(i,j)} \in E_{red} $ can be rephrased as $W(e_{ij}) = \vec{x_j} - \vec{x_i} \in (\mathbb{S}^1 \times 0) \subset \T^2$, if $ e_{(i,j)} \in E_{red} $. This is a hyperplane in $C^1(\Gamma,\T^2)$. 
Repeating this for each edge, the slope restraints arising from the coloring correspond to restricting to a subset $S$ of $C^1(\Gamma,\T^2)$ that is the common intersection of multiple hyperplanes.

\[
    S := \left\{ 
    \begin{tabular}{c|ccccc} 
         \multirow{3}{*}
         {$ W : C_1 (\Gamma,\T^2) \longrightarrow \T^2$}
    & $W(e_{ij})$ &$= \vec{x_j} - \vec{x_i}$ & $\in (\mathbb{S}^1 \times 0)$ & if $ e_{(i,j)} \in E_{red} $ \\
    & $W(e_{ij})$ &$= \vec{x_j} - \vec{x_i}$ & $\in (0 \times \mathbb{S}^1)$ & if $ e_{(i,j)} \in E_{blue} $\\
    & $W(e_{ij})$ &$= \vec{x_j} - \vec{x_i}$ & $\in \Delta = \{ (x,x) \in \T^2 \}$ & if $ e_{(i,j)} \in E_{green} $
    \end{tabular} 
    \right\}
\]

Let $\mathcal{M}_{0}(\Gamma)$ denote {\it based} Geometric Triple Grid diagrams, where a designated vertex $v_1$ is mapped to a distinguished basepoint $(0,0)$ in $\T^2$:

\[
    \mathcal{M}_0(\Gamma):= \left\{ \begin{tabular}{c|cl} 
         \multirow{4}{*}{$\phi : \Gamma \longrightarrow \mathbb{T}^2 $} 
    & $x_i = x_j$ &  if $ e_{(i,j)} \in E_{red} $, \\
    &  $y_i = y_j $ &  if $e_{(i,j)} \in E_{blue}$, \\
    & $ x_i + y_i = x_j + y_j $& if $e_{(i,j)} \in E_{green}  $,\\
    & $\phi(v_1) = (0,0)$ & 
    \end{tabular} \right\} \\   
\]
Then
\begin{lemma}
\label{lemma:m0-dimension}
There is an inclusion
\[ \mgamma \subset  (\ker d^* )\oplus (\Ima d^* \cap S) \cong  \T^2 \oplus  I_0(\Gamma) \]
Moreover, the dimension of moduli of based Geometric Triple Grid diagrams satisfies
\[\dim \mathcal{M}_0(\Gamma) \geq b - 2 \]   
\end{lemma}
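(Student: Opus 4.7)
The plan is to combine the identification of $\II(\Gamma)$ provided by \autoref{lemma:gamma-immersions} with an elementary dimension count for the intersection of two subtori inside $C^1(\Gamma,\T^2)$. First I would translate the slope conditions: the requirements that define $\mgamma$ (dropping the ``only if'' direction) say precisely that $d^*\phi \in S$. Using the splitting $C^0(\Gamma,\T^2) \cong \ker d^* \oplus \Ima d^*$, any $\phi$ decomposes as $\phi_0 + \phi_1$ with $\phi_0$ a constant cochain and $\phi_1$ sent isomorphically by $d^*$ onto $\Ima d^*$. Since $d^*\phi = d^*\phi_1$, the slope condition constrains only $\phi_1$, which must lie in $(d^*)^{-1}(S\cap \Ima d^*)$. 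Pushing through the isomorphism yields the inclusion
\[
\mgamma \;\subset\; (\ker d^*)\oplus (\Ima d^* \cap S) \;\cong\; \T^2 \oplus I_0(\Gamma).
\]

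Next I would tabulate real dimensions. Since $\Gamma$ is a cubic graph on $2b$ vertices, it has $3b$ edges, partitioned by the Tait-coloring into three perfect matchings of size $b$; hence $\dim_\RR C^0(\Gamma,\T^2) = 4b$ and $\dim_\RR C^1(\Gamma,\T^2) = 6b$. Assuming $\Gamma$ is connected, $\ker d^*$ is the subtorus of constant cochains, so $\dim \ker d^* = 2$ and $\dim \Ima d^* = 4b-2$. The subtorus $S$ is a product of $3b$ one-dimensional subtori (one prescribed slope direction per edge), giving $\dim S = 3b$.

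Now I would apply the standard inequality $\dim(A\cap B) \geq \dim A + \dim B - \dim V$ for subtori $A,B$ of a torus $V$; this reduces to the usual intersection formula for linear subspaces by passing to the Lie algebras $H^*(\Gamma,\RR)$, since $\Ima d^*$ and $S$ are both subtori defined by linear conditions. Plugging in,
\[
\dim(\Ima d^* \cap S) \;\geq\; (4b-2) + 3b - 6b \;=\; b - 2.
\]

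Finally, for the based moduli $\mathcal{M}_0(\Gamma)$, the condition $\phi(v_1) = (0,0)$ uniquely determines the constant summand $\phi_0 \in \ker d^*$ in terms of $\phi_1(v_1)$, hence imposes codimension exactly $2$ and precisely kills the $\T^2$ translation factor. Therefore $\mathcal{M}_0(\Gamma)$ is in bijection with $\Ima d^* \cap S$, giving $\dim \mathcal{M}_0(\Gamma) \geq b-2$. The two points requiring care are (i) confirming that the splitting from \autoref{lemma:gamma-immersions} is compatible with the slope constraint, so that the preimage of $S$ really factors as $\ker d^* \oplus (\Ima d^* \cap S)$, and (ii) justifying the intersection dimension bound in the torus setting rather than for vector subspaces; both are resolved by working linearly on the universal cover.
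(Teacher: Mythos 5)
Your proposal is correct and follows essentially the same route as the paper: identify $\mgamma$ inside $\ker d^* \oplus \Ima d^*$ via Lemma \ref{lemma:gamma-immersions}, note the slope condition only constrains the $\Ima d^*$ summand, and apply the intersection inequality $\dim(\Ima d^* \cap S) \geq (4b-2) + 3b - 6b = b-2$. You are in fact slightly more careful than the printed proof, which asserts the inclusion ``by construction,'' contains an arithmetic slip ($7b$ where $6b$ is meant), and does not address the point you raise about justifying the intersection bound for subtori by passing to the universal cover.
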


\begin{proof}

   The inclusion is true by construction.

   The dimension of $I(\Gamma)$ is given by
        \[\dim (\II(\Gamma)) = \dim ( C^0 (\Gamma,\T^2) ) = |V| \times \dim (\T^2) = 2b \times 2 = 4b\]
    and $\dim \Ima d^* = \dim (\II(\Gamma)) - 2$.  The dimension of $S$ is exactly half the dimension of $C^1(\Gamma,\T^2)$, since each 1-cochain is constrained to lie in a 1-dimensional subspace of $\T^2$.  Therefore
        \[\dim S = \frac{1}{2} \dim C^1(\Gamma,\T^2) = \frac{1}{2} (|E|\times 2) = |E| = 3b\]
    Combining this, we have
\[\dim (\Ima d^* \cap S) \geq \dim \Ima d^* + \dim S - \dim C^1(\Gamma,\T^2) = 4b - 2 + 3b -7b = b-2 \]
   So, $\dim (\Ima d^* \cap S) = \dim \mathcal{M}_0(\Gamma) \geq b- 2 $.

\end{proof}

\subsection{Degenerate Immersions}

Similarly, the nondegeneracy conditions on a triple grid diagram can be reinterpreted in terms of the complement of a hyperplane arrangement in $C^0(\Gamma,\T^2)$.

To ensure that $x_i = x_j$ only if $e_{(i,j)} \in E_{red}$, we need to remove the cases when $x_i = x_k$ for $e_{(i,k)} \notin E_{red}$. 
Let $\Tilde{X} \in C^0(\Gamma, \T^2)$. Then 

\begin{align*}
    \Tilde{X} : C_0 (\Gamma,\T^2) &\longrightarrow \T^2\\
                 v_i       &\longmapsto (x_i,y_i)
\end{align*}
Define 
\[ R_{i,k} := \left\{ \Tilde{X}: x_i = x_k \right\}\]
It is a codimension-1 hyperplane in $C^0(\Gamma,\T^2)$.  Let $\cR_{i,k} = R_{i,k} \oplus C^1(\Gamma,\T^2)$ and 
\[\cR = \bigcup_{\substack{i,k \\ e_{i,k} \notin E_{red}} } \cR_{i,k}\]
Define $B_{i,k},\cB$ and $G_{i,k},\cG$ similarly for the blue and green edges, respectively.  Then $D = \cR \cup \cB \cup \cG$ is a union of hyperplanes in $C^*(\Gamma,\T^2)$ and a triple grid diagram must lie in the complement of $D$.

This gives us the following proposition.

\begin{proposition}
\label{prop:main-moduli-description}

    \[\mgamma =  \big ( (\ker d^* )\oplus (\Ima d^* \cap S) \big )\setminus D = \big ( (\T^2)\oplus \mathcal{M}_0(\Gamma) \big )\setminus D  \]
    
\end{proposition}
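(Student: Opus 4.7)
The plan is to unpack the three ``if and only if'' conditions defining $\mgamma$ and to verify that they correspond, respectively, to the linear cut by $S$ inside $\Ima d^*$ and to the removal of the hyperplane arrangement $D$. Once this translation is in place, both equalities are formal consequences of the identifications established in \autoref{lemma:gamma-immersions}, \autoref{cor:gamma-immersions}, and \autoref{lemma:m0-dimension}.

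First I would apply \autoref{lemma:gamma-immersions} to replace an immersion $\phi \in \II(\Gamma)$ by the pair $(X,W)$ where $X \in \ker d^*$ records vertex positions, $X(v_i) = \vec{x_i}$, and $W = d^* X \in \Ima d^*$ records edge differences, $W(e_{ij}) = \vec{x_j} - \vec{x_i}$. The forward implications of the three biconditionals in the definition of $\mgamma$ (``if $e_{(i,j)} \in E_{\text{red}}$ then $x_i = x_j$'', and analogously for blue and green edges) translate directly into $W(e_{ij})$ lying in the prescribed $1$-dimensional subspace of $\T^2$ for each colored edge, which is precisely the defining condition for $W \in S$. Hence the forward implications together are equivalent to $\phi \in (\ker d^*) \oplus (\Ima d^* \cap S)$.

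Next I would handle the reverse implications, which state that a coordinate coincidence may occur only along an edge of the corresponding color. The negation ``$e_{(i,k)} \notin E_{\text{red}}$ while $x_i = x_k$'' places $\phi$ inside the hyperplane $\cR_{i,k}$, so ranging over all such pairs shows that the reverse red implication is equivalent to $\phi \notin \cR$. The same analysis for blue and green non-edges shows that the three reverse implications together are equivalent to $\phi \notin D = \cR \cup \cB \cup \cG$. Combining with the previous paragraph yields
\[\mgamma = \big( (\ker d^*) \oplus (\Ima d^* \cap S) \big) \setminus D.\]

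The second equality is then an identification of summands. By \autoref{cor:gamma-immersions} the kernel $\ker d^*$ is canonically $\T^2$ (the group of constant $0$-cochains), and by construction (see \autoref{lemma:m0-dimension}) $\mathcal{M}_0(\Gamma)$ is identified with $\Ima d^* \cap S$: fixing $\phi(v_1) = (0,0)$ selects exactly one representative from each $\ker d^*$-coset, while the $\T^2$ factor parametrizes the residual choice of translation. The whole argument is essentially a clean dictionary between the moduli-theoretic description of $\mgamma$ and the cochain picture, so there is no real obstacle; the only subtlety to guard against is to split each biconditional carefully into its two halves, so that the forward implications produce the linear subspace and the reverse implications produce the hyperplane complement.
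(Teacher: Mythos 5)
Your proposal is correct and follows essentially the same route as the paper, which states the proposition without a separate proof as a direct summary of the preceding constructions: the ``if'' halves of the biconditionals are exactly the slope condition $W = d^*X \in S$, the ``only if'' halves are exactly membership in the complement of $D = \cR \cup \cB \cup \cG$, and the second equality is the identification $\ker d^* \cong \T^2$, $\Ima d^* \cap S \cong \mathcal{M}_0(\Gamma)$ from Corollary \ref{cor:gamma-immersions} and Lemma \ref{lemma:m0-dimension}. Your explicit splitting of each biconditional into its two halves is exactly the dictionary the paper relies on implicitly.
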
   

Since $D$ has codimension 1, removing it does not affect the dimension.  Then Theorem \ref{thrm:moduli-dimension} follows from the above proposition and Lemma \ref{lemma:m0-dimension}.

\subsection{Examples}

\subsubsection{}
\label{ex:b1}
Let us consider the case when $b=1$. Then the only cubic Tait colored graph possible is the theta graph. Since the two vertices are connected by edges of all three colors, the immersion needs to connect two points with lines having slope $0$, $1$ and $\infty$. The only immersion possible is degenerate. 

\subsubsection{}
\label{ex:b2}
 There is only one cubic Tait colored graph having $b=2$, which is shown in Figure \ref{fig:b2_graph}.
 This graph represents $\RP2$. From Proposition 4.5,  $\dim \mgamma \geq 2 $ and $\dim \mathcal{M}_0(\Gamma) \geq 0$.
 Due to the slope restritions, there is only one Geometric Triple Grid diagram upto translation, shown in Figure \ref{fig:b2_GTGD}.
 Hence these inequalities are strict for $\RP2$.   

\begin{figure}[H]
\centering
\begin{subfigure}{.25\textwidth}
  \centering
  \includegraphics[width = 0.8 \textwidth]{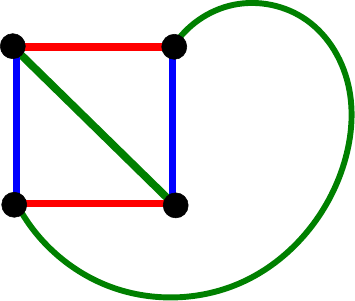}
  \caption{$\Gamma$ with $b=2$}
  \label{fig:b2_graph}
\end{subfigure}%
\begin{subfigure}{.35\textwidth}
  \centering
 \def\svgwidth{0.9\textwidth}
    %% Creator: Inkscape inkscape 0.92.3, www.inkscape.org
%% PDF/EPS/PS + LaTeX output extension by Johan Engelen, 2010
%% Accompanies image file 'b2_geometric_triple_grid_rp2.pdf' (pdf, eps, ps)
%%
%% To include the image in your LaTeX document, write
%%   \input{<filename>.pdf_tex}
%%  instead of
%%   \includegraphics{<filename>.pdf}
%% To scale the image, write
%%   \def\svgwidth{<desired width>}
%%   \input{<filename>.pdf_tex}
%%  instead of
%%   \includegraphics[width=<desired width>]{<filename>.pdf}
%%
%% Images with a different path to the parent latex file can
%% be accessed with the `import' package (which may need to be
%% installed) using
%%   \usepackage{import}
%% in the preamble, and then including the image with
%%   \import{<path to file>}{<filename>.pdf_tex}
%% Alternatively, one can specify
%%   \graphicspath{{<path to file>/}}
%% 
%% For more information, please see info/svg-inkscape on CTAN:
%%   http://tug.ctan.org/tex-archive/info/svg-inkscape
%%
\begingroup%
  \makeatletter%
  \providecommand\color[2][]{%
    \errmessage{(Inkscape) Color is used for the text in Inkscape, but the package 'color.sty' is not loaded}%
    \renewcommand\color[2][]{}%
  }%
  \providecommand\transparent[1]{%
    \errmessage{(Inkscape) Transparency is used (non-zero) for the text in Inkscape, but the package 'transparent.sty' is not loaded}%
    \renewcommand\transparent[1]{}%
  }%
  \providecommand\rotatebox[2]{#2}%
  \newcommand*\fsize{\dimexpr\f@size pt\relax}%
  \newcommand*\lineheight[1]{\fontsize{\fsize}{#1\fsize}\selectfont}%
  \ifx\svgwidth\undefined%
    \setlength{\unitlength}{359.70273512bp}%
    \ifx\svgscale\undefined%
      \relax%
    \else%
      \setlength{\unitlength}{\unitlength * \real{\svgscale}}%
    \fi%
  \else%
    \setlength{\unitlength}{\svgwidth}%
  \fi%
  \global\let\svgwidth\undefined%
  \global\let\svgscale\undefined%
  \makeatother%
  \begin{picture}(1,0.96746058)%
    \lineheight{1}%
    \setlength\tabcolsep{0pt}%
    \put(0,0){\includegraphics[width=\unitlength,page=1]{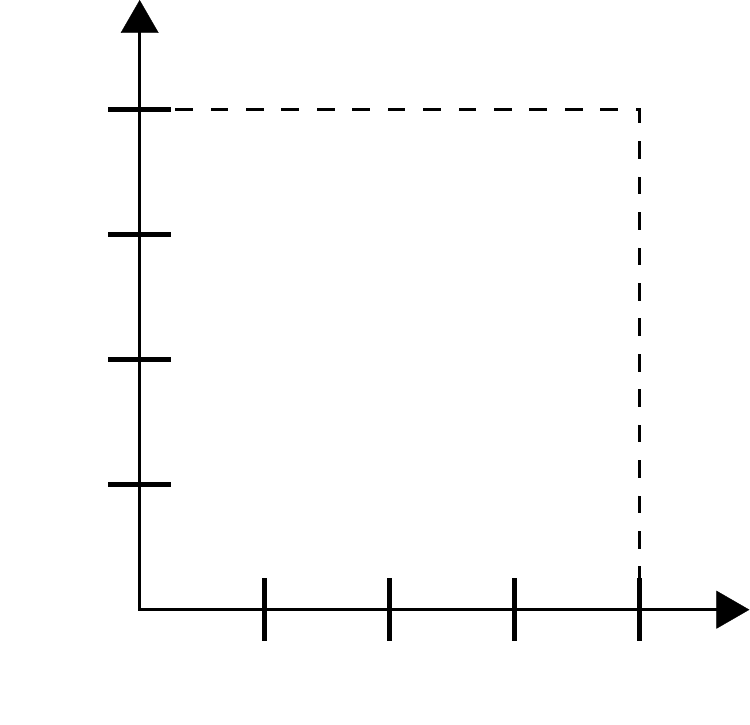}}%
    \put(-0.00028649,0.01818436){\color[rgb]{0,0,0}\makebox(0,0)[lt]{\lineheight{1.25}\smash{\begin{tabular}[t]{l}$0$\\\end{tabular}}}}%
    \put(0.41821385,0.01222706){\color[rgb]{0,0,0}\makebox(0,0)[lt]{\lineheight{1.25}\smash{\begin{tabular}[t]{l}$0.5$\\\end{tabular}}}}%
    \put(0.79650238,0.0092484){\color[rgb]{0,0,0}\makebox(0,0)[lt]{\lineheight{1.25}\smash{\begin{tabular}[t]{l}$1$\\\end{tabular}}}}%
    \put(-0.00475442,0.46796055){\color[rgb]{0,0,0}\makebox(0,0)[lt]{\lineheight{1.25}\smash{\begin{tabular}[t]{l}$0.5$\\\end{tabular}}}}%
    \put(-0.00475442,0.80156935){\color[rgb]{0,0,0}\makebox(0,0)[lt]{\lineheight{1.25}\smash{\begin{tabular}[t]{l}$1$\\\end{tabular}}}}%
    \put(0,0){\includegraphics[width=\unitlength,page=2]{b2_geometric_triple_grid_rp2.pdf}}%
  \end{picture}%
\endgroup%

  \caption{Geometric Triple Grid Diagram}
  \label{fig:b2_GTGD}
\end{subfigure}
\caption{Example with $b=2$ representing $\RP2$}
\label{fig:b2_example}
\end{figure}

 \subsubsection{}
 Consider $\Gamma$ to be the graph $K_{3,3}$, having $b=3$. This symmetric graph has only Tait-coloring possible depicted in Figure \ref{fig:b3_graph}. Due to Theorem \ref{thm:main_thm},we know that $\dim \mathcal{M}(K_{3,3}) \geq 3 $ and $\dim \mathcal{M}_0(K_{3,3}) \geq 1$. By calculation, we see that $\mathcal{M}'(K_{3,3}) \cong \T^2$ represented in Figure \ref{fig:b3_moduli}, and so $\dim \mathcal{M}(K_{3,3}) = 4$.

 \begin{figure}[H]
\centering
\begin{subfigure}{.3\textwidth}
  \centering
  \includegraphics[width = 0.8 \textwidth]{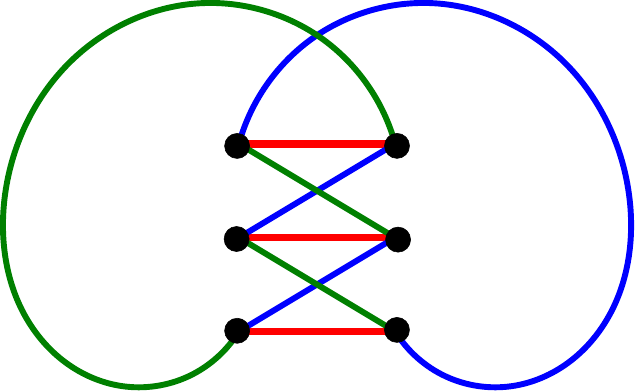}
  \caption{$\Gamma$ with $b=3$ }
  \label{fig:b3_graph}
\end{subfigure}%
\begin{subfigure}{.3\textwidth}
  \centering
 \def\svgwidth{0.9\textwidth}
    %% Creator: Inkscape inkscape 0.92.3, www.inkscape.org
%% PDF/EPS/PS + LaTeX output extension by Johan Engelen, 2010
%% Accompanies image file 'b3_geometric_triple_grid_k33.pdf' (pdf, eps, ps)
%%
%% To include the image in your LaTeX document, write
%%   \input{<filename>.pdf_tex}
%%  instead of
%%   \includegraphics{<filename>.pdf}
%% To scale the image, write
%%   \def\svgwidth{<desired width>}
%%   \input{<filename>.pdf_tex}
%%  instead of
%%   \includegraphics[width=<desired width>]{<filename>.pdf}
%%
%% Images with a different path to the parent latex file can
%% be accessed with the `import' package (which may need to be
%% installed) using
%%   \usepackage{import}
%% in the preamble, and then including the image with
%%   \import{<path to file>}{<filename>.pdf_tex}
%% Alternatively, one can specify
%%   \graphicspath{{<path to file>/}}
%% 
%% For more information, please see info/svg-inkscape on CTAN:
%%   http://tug.ctan.org/tex-archive/info/svg-inkscape
%%
\begingroup%
  \makeatletter%
  \providecommand\color[2][]{%
    \errmessage{(Inkscape) Color is used for the text in Inkscape, but the package 'color.sty' is not loaded}%
    \renewcommand\color[2][]{}%
  }%
  \providecommand\transparent[1]{%
    \errmessage{(Inkscape) Transparency is used (non-zero) for the text in Inkscape, but the package 'transparent.sty' is not loaded}%
    \renewcommand\transparent[1]{}%
  }%
  \providecommand\rotatebox[2]{#2}%
  \newcommand*\fsize{\dimexpr\f@size pt\relax}%
  \newcommand*\lineheight[1]{\fontsize{\fsize}{#1\fsize}\selectfont}%
  \ifx\svgwidth\undefined%
    \setlength{\unitlength}{373.83744336bp}%
    \ifx\svgscale\undefined%
      \relax%
    \else%
      \setlength{\unitlength}{\unitlength * \real{\svgscale}}%
    \fi%
  \else%
    \setlength{\unitlength}{\svgwidth}%
  \fi%
  \global\let\svgwidth\undefined%
  \global\let\svgscale\undefined%
  \makeatother%
  \begin{picture}(1,0.93088112)%
    \lineheight{1}%
    \setlength\tabcolsep{0pt}%
    \put(0,0){\includegraphics[width=\unitlength,page=1]{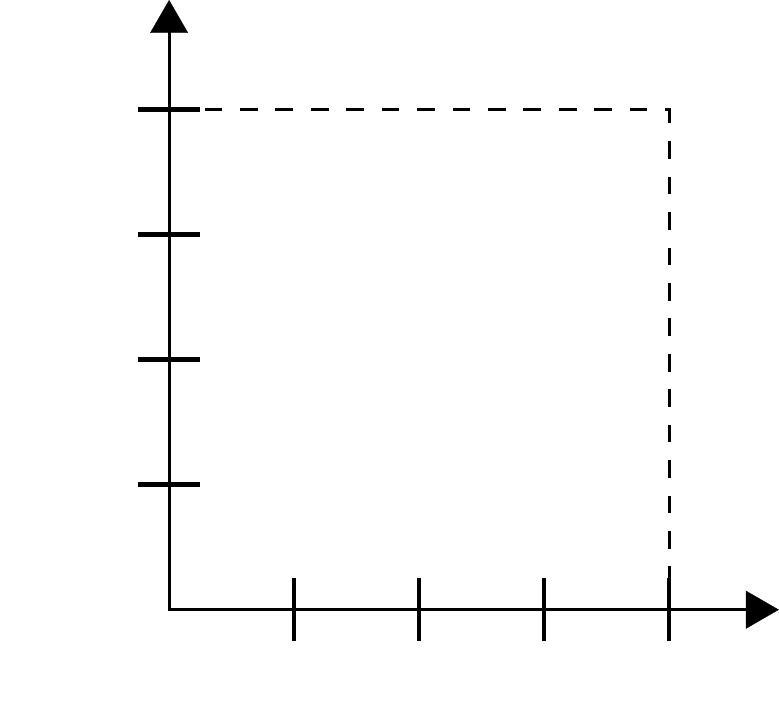}}%
    \put(0.02606998,0.01749681){\color[rgb]{0,0,0}\makebox(0,0)[lt]{\lineheight{1.25}\smash{\begin{tabular}[t]{l}$0$\\\end{tabular}}}}%
    \put(0.48320146,0.00889872){\color[rgb]{0,0,0}\makebox(0,0)[lt]{\lineheight{1.25}\smash{\begin{tabular}[t]{l}$0.5$\\\end{tabular}}}}%
    \put(0.80419659,0.00889872){\color[rgb]{0,0,0}\makebox(0,0)[lt]{\lineheight{1.25}\smash{\begin{tabular}[t]{l}$1$\\\end{tabular}}}}%
    \put(-0.00457466,0.45026707){\color[rgb]{0,0,0}\makebox(0,0)[lt]{\lineheight{1.25}\smash{\begin{tabular}[t]{l}$0.5$\\\end{tabular}}}}%
    \put(0.02177098,0.7712622){\color[rgb]{0,0,0}\makebox(0,0)[lt]{\lineheight{1.25}\smash{\begin{tabular}[t]{l}$1$\\\end{tabular}}}}%
    \put(0,0){\includegraphics[width=\unitlength,page=2]{b3_geometric_triple_grid_k33.pdf}}%
  \end{picture}%
\endgroup%

  \caption{Geometric Triple Grid }
  \label{fig:b3_GTGD}
\end{subfigure}
\begin{subfigure}{.3\textwidth}
  \centering
 \def\svgwidth{0.9\textwidth}
    %% Creator: Inkscape inkscape 0.92.3, www.inkscape.org
%% PDF/EPS/PS + LaTeX output extension by Johan Engelen, 2010
%% Accompanies image file 'Moduli_space_b3_k33.pdf' (pdf, eps, ps)
%%
%% To include the image in your LaTeX document, write
%%   \input{<filename>.pdf_tex}
%%  instead of
%%   \includegraphics{<filename>.pdf}
%% To scale the image, write
%%   \def\svgwidth{<desired width>}
%%   \input{<filename>.pdf_tex}
%%  instead of
%%   \includegraphics[width=<desired width>]{<filename>.pdf}
%%
%% Images with a different path to the parent latex file can
%% be accessed with the `import' package (which may need to be
%% installed) using
%%   \usepackage{import}
%% in the preamble, and then including the image with
%%   \import{<path to file>}{<filename>.pdf_tex}
%% Alternatively, one can specify
%%   \graphicspath{{<path to file>/}}
%% 
%% For more information, please see info/svg-inkscape on CTAN:
%%   http://tug.ctan.org/tex-archive/info/svg-inkscape
%%
\begingroup%
  \makeatletter%
  \providecommand\color[2][]{%
    \errmessage{(Inkscape) Color is used for the text in Inkscape, but the package 'color.sty' is not loaded}%
    \renewcommand\color[2][]{}%
  }%
  \providecommand\transparent[1]{%
    \errmessage{(Inkscape) Transparency is used (non-zero) for the text in Inkscape, but the package 'transparent.sty' is not loaded}%
    \renewcommand\transparent[1]{}%
  }%
  \providecommand\rotatebox[2]{#2}%
  \newcommand*\fsize{\dimexpr\f@size pt\relax}%
  \newcommand*\lineheight[1]{\fontsize{\fsize}{#1\fsize}\selectfont}%
  \ifx\svgwidth\undefined%
    \setlength{\unitlength}{430.14967568bp}%
    \ifx\svgscale\undefined%
      \relax%
    \else%
      \setlength{\unitlength}{\unitlength * \real{\svgscale}}%
    \fi%
  \else%
    \setlength{\unitlength}{\svgwidth}%
  \fi%
  \global\let\svgwidth\undefined%
  \global\let\svgscale\undefined%
  \makeatother%
  \begin{picture}(1,0.88851382)%
    \lineheight{1}%
    \setlength\tabcolsep{0pt}%
    \put(0,0){\includegraphics[width=\unitlength,page=1]{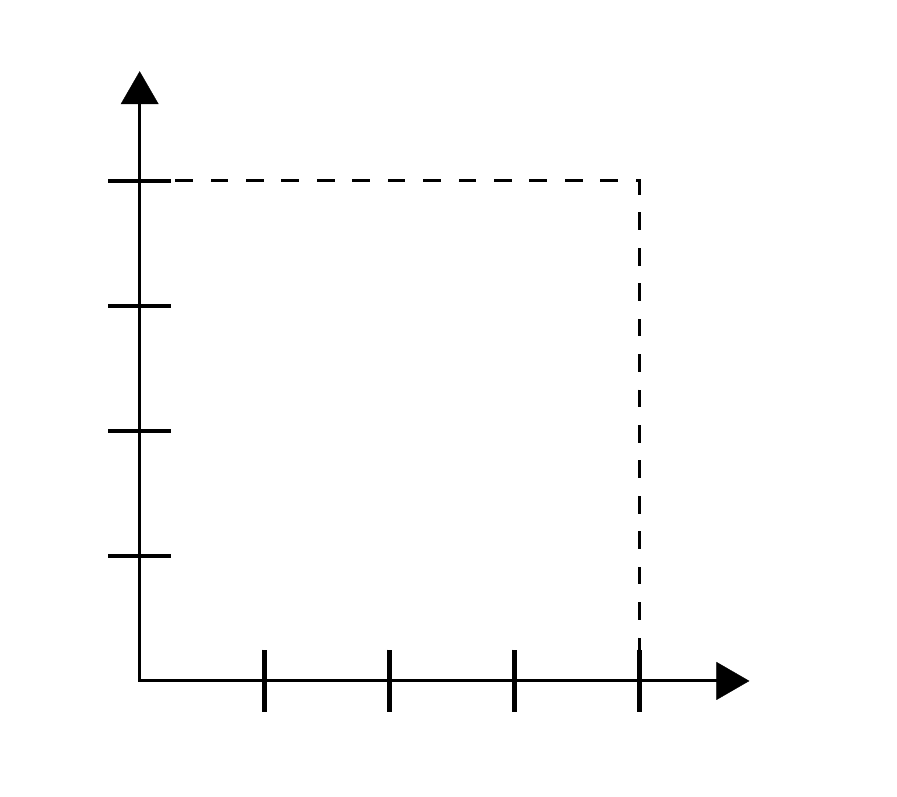}}%
    \put(-0.00023957,0.01520625){\color[rgb]{0,0,0}\makebox(0,0)[lt]{\lineheight{1.25}\smash{\begin{tabular}[t]{l}$0$\\\end{tabular}}}}%
    \put(0.34972167,0.01022459){\color[rgb]{0,0,0}\makebox(0,0)[lt]{\lineheight{1.25}\smash{\begin{tabular}[t]{l}$0.5$\\\end{tabular}}}}%
    \put(0.66605673,0.00773376){\color[rgb]{0,0,0}\makebox(0,0)[lt]{\lineheight{1.25}\smash{\begin{tabular}[t]{l}$1$\\\end{tabular}}}}%
    \put(-0.00397578,0.39132121){\color[rgb]{0,0,0}\makebox(0,0)[lt]{\lineheight{1.25}\smash{\begin{tabular}[t]{l}$0.5$\\\end{tabular}}}}%
    \put(-0.00397578,0.67029387){\color[rgb]{0,0,0}\makebox(0,0)[lt]{\lineheight{1.25}\smash{\begin{tabular}[t]{l}$1$\\\end{tabular}}}}%
    \put(0,0){\includegraphics[width=\unitlength,page=2]{Moduli_space_b3_k33.pdf}}%
    \put(0.8572401,0.12332131){\color[rgb]{0,0,0}\makebox(0,0)[lt]{\lineheight{1.25}\smash{\begin{tabular}[t]{l}$p_1$\end{tabular}}}}%
    \put(0.00413162,0.8518882){\color[rgb]{0,0,0}\makebox(0,0)[lt]{\lineheight{1.25}\smash{\begin{tabular}[t]{l}$p_2$\end{tabular}}}}%
  \end{picture}%
\endgroup%

  \caption{Moduli Space }
  \label{fig:b3_moduli}
\end{subfigure}
\caption{Example Graph $K_{3,3}$ with $b=3$ representing }
\label{fig:b3}
\end{figure}

The pink line in the center is a plane in $D$, which denotes degenerate immersions of $K_{3,3}$, along with the lines $p_1 = 0$ and $p_2 = 0$ The yellow region and the cyan blue region correspond to two regions in $ \mathcal{M}'(K_{3,3})$. Each region maps to Geometric Triple Grid diagrams which differ by minor perturbations and therefore correspond to the same Triple Grid diagram. Going from one Triple Grid diagram to another requires passing through a hyperplane in $D$. Therefore, Triple Grid moves can be defined as moving through a degenerate immersion. These moves do not change the underlying graph $\Gamma$ and therefore so not change the surface in the bridge trisection.

 \subsubsection{}

For $b=4$, consider $\Gamma$ to be the cubic Tait-colored graph in Figure \ref{fig:b4_graph} which corresponds to $\T^2$. Due to Theorem \ref{thm:main_thm}, we know that $\dim \mathcal{M}(\Gamma) \geq 4 $ and therefore $\mathcal{M}_0(\Gamma) \geq 2$.

\begin{figure}[H]
   \centering
   \begin{subfigure}{.3\textwidth}
        \includegraphics[width = 0.8\textwidth]{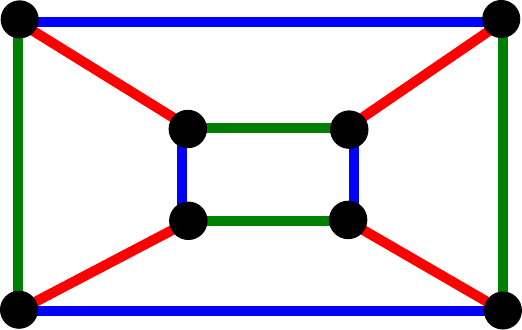}
    \caption{$\Gamma$ with $b=4$}
    \label{fig:b4_graph}
    \end{subfigure}
   \begin{subfigure}{.5\textwidth}   
      \centering
        \def\svgwidth{0.6\textwidth}
    %% Creator: Inkscape inkscape 0.92.3, www.inkscape.org
%% PDF/EPS/PS + LaTeX output extension by Johan Engelen, 2010
%% Accompanies image file 'Moduli_space_b4_torus.pdf' (pdf, eps, ps)
%%
%% To include the image in your LaTeX document, write
%%   \input{<filename>.pdf_tex}
%%  instead of
%%   \includegraphics{<filename>.pdf}
%% To scale the image, write
%%   \def\svgwidth{<desired width>}
%%   \input{<filename>.pdf_tex}
%%  instead of
%%   \includegraphics[width=<desired width>]{<filename>.pdf}
%%
%% Images with a different path to the parent latex file can
%% be accessed with the `import' package (which may need to be
%% installed) using
%%   \usepackage{import}
%% in the preamble, and then including the image with
%%   \import{<path to file>}{<filename>.pdf_tex}
%% Alternatively, one can specify
%%   \graphicspath{{<path to file>/}}
%% 
%% For more information, please see info/svg-inkscape on CTAN:
%%   http://tug.ctan.org/tex-archive/info/svg-inkscape
%%
\begingroup%
  \makeatletter%
  \providecommand\color[2][]{%
    \errmessage{(Inkscape) Color is used for the text in Inkscape, but the package 'color.sty' is not loaded}%
    \renewcommand\color[2][]{}%
  }%
  \providecommand\transparent[1]{%
    \errmessage{(Inkscape) Transparency is used (non-zero) for the text in Inkscape, but the package 'transparent.sty' is not loaded}%
    \renewcommand\transparent[1]{}%
  }%
  \providecommand\rotatebox[2]{#2}%
  \newcommand*\fsize{\dimexpr\f@size pt\relax}%
  \newcommand*\lineheight[1]{\fontsize{\fsize}{#1\fsize}\selectfont}%
  \ifx\svgwidth\undefined%
    \setlength{\unitlength}{539.68687256bp}%
    \ifx\svgscale\undefined%
      \relax%
    \else%
      \setlength{\unitlength}{\unitlength * \real{\svgscale}}%
    \fi%
  \else%
    \setlength{\unitlength}{\svgwidth}%
  \fi%
  \global\let\svgwidth\undefined%
  \global\let\svgscale\undefined%
  \makeatother%
  \begin{picture}(1,1.1748251)%
    \lineheight{1}%
    \setlength\tabcolsep{0pt}%
    \put(0,0){\includegraphics[width=\unitlength,page=1]{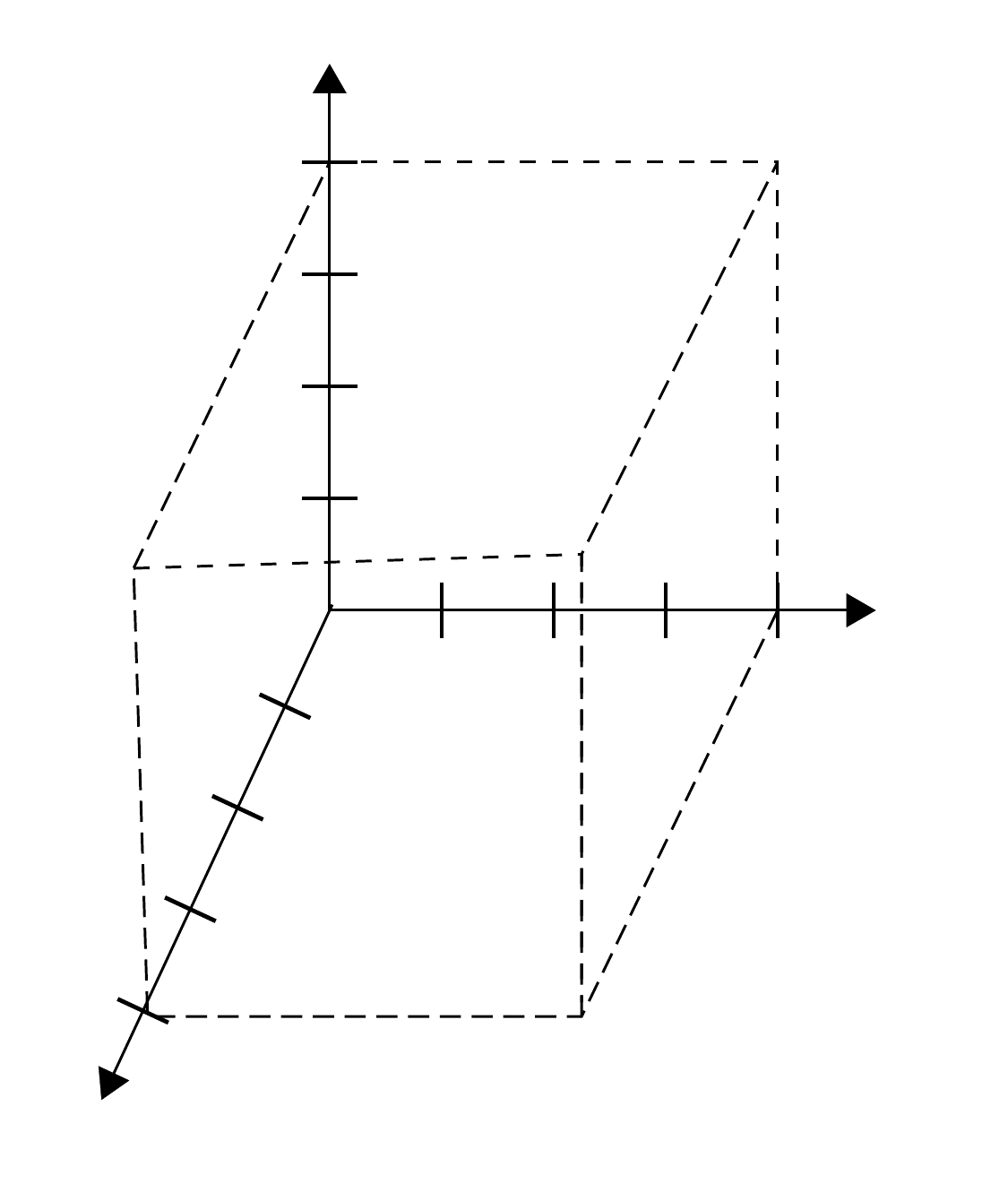}}%
    \put(0.88621527,0.56493941){\color[rgb]{0,0,0}\makebox(0,0)[lt]{\lineheight{1.25}\smash{\begin{tabular}[t]{l}$p_1$\end{tabular}}}}%
    \put(0.20625736,1.14563318){\color[rgb]{0,0,0}\makebox(0,0)[lt]{\lineheight{1.25}\smash{\begin{tabular}[t]{l}$p_2$\end{tabular}}}}%
    \put(0,0){\includegraphics[width=\unitlength,page=2]{Moduli_space_b4_torus.pdf}}%
    \put(-0.00318935,0.0090615){\color[rgb]{0,0,0}\makebox(0,0)[lt]{\lineheight{1.25}\smash{\begin{tabular}[t]{l}$p_3$\end{tabular}}}}%
    \put(0,0){\includegraphics[width=\unitlength,page=3]{Moduli_space_b4_torus.pdf}}%
  \end{picture}%
\endgroup%

    \caption{Moduli Space}
    \label{fig:b4_moduli}
    \end{subfigure}
\caption{Example with $b=4$ representing $\T^2$}
\label{fig:b4_graph_grid}
\end{figure}

By calculation, we get that $\mathcal{M}_0(\Gamma) \cong \T^3$ as shown in Figure \ref{fig:b4_moduli}. The moduli space is divided in to 6 regions by 6 degenerate planes, which correspond to $p_1 = 0$,$p_2 = 0$,$p_3 = 0$, $p_1 = p_2$, $p_2 = p_3$, $p_3 = p_1$ in this parameterized representation. We can see Geometric Triple Grid Diagrams from each of the 6 differently colored regions, represented in Figure \ref{fig:b4_GTGD}. These give us the corresponding Combinatorial Triple Grid Diagrams, as shown in Figure \ref{fig:b4_CTGD}. Therefore, by fixing the underlying graph $\Gamma$, we are able to find all the equivalent Triple Grid Diagrams. We can also see many Triple Grid moves, like the one shown in Figure \ref{fig:TripleGridMove}.

Note that the degenerate planes $p_1 = p_2$, $p_2 = p_3$ and $p_3 = p_1$ intersect at the line $p_1 = p_2 = p_3$ represented in pink in the figure. Therefore, we can see that all 6 regions share this line in their boundary.

\begin{figure}
\centering
\begin{subfigure}[b]{0.75\textwidth}
  \def\svgwidth{\textwidth}
        \import{figures/}{1_b4_geometric_triple_grid.pdf_tex}
        \caption{Six Geometric Triple Grid diagrams corresponding to the six regions in the moduli space for the $b=4$ example in Figure \ref{fig:b4_graph_grid}}
        \label{fig:b4_GTGD}
\end{subfigure}

\begin{comment}
\begin{subfigure}[b]{0.75\textwidth}
\def\svgwidth{\textwidth}
  \import{figures/}{2_b4_geometric_triple_grid.pdf_tex}
        \caption{Six \textbf{Combinatorial} Triple Grid diagrams corresponding to the six regions in the moduli space for the $b=4$ example}
        \label{fig:b4_CTGD}
\end{subfigure}
\end{comment}
\end{figure}

\begin{figure}[H]
    \centering
    \includegraphics[width = 0.4\textwidth]{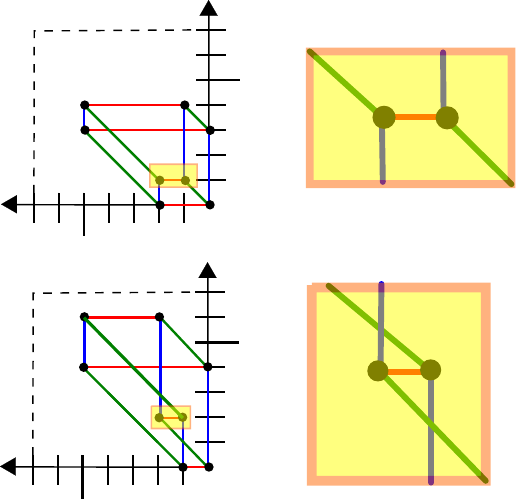}
    \caption{Example of Triple Grid Move}
    \label{fig:TripleGridMove}
\end{figure}

\section{Computation} 

Cochains with $\T^2$-coefficients are computationally awkward.  Instead, we can use the fact that $\T^2 = \RR^2/\ZZ^2$ and work with $\RR^2$-valued cochains instead.  The moduli $\cM(\Gamma)$ of triple grid diagrams can be modeled as a {\it subset} of $C^0(\Gamma,\T^2)$, but a suitable cover of $\cM(\Gamma)$ can be realized as a collection of affine subspaces in $C^1(\Gamma,\RR^2)$.  In particular, these are solution sets to systems of linear equations and can therefore be computed in polynomial time.

From the short exact sequence
\[0 \rightarrow \ZZ^2 \rightarrow \RR^2 \rightarrow \T^2 \rightarrow 0\]
we obtain a short exact sequence of cochain complexes
%% https://q.uiver.app/#q=WzAsMTMsWzMsMiwiQ14wKFxcR2FtbWEsIFxcbWF0aGJie1R9XjIpIl0sWzIsMiwiQ14wKFxcR2FtbWEsIFxcbWF0aGJie1J9XjIpIl0sWzEsMiwiQ14wKFxcR2FtbWEsIFxcbWF0aGJie1p9XjIpIl0sWzEsMSwiQ14xKFxcR2FtbWEsIFxcbWF0aGJie1p9XjIpIl0sWzIsMSwiQ14xKFxcR2FtbWEsIFxcbWF0aGJie1J9XjIpIl0sWzMsMSwiQ14xKFxcR2FtbWEsIFxcbWF0aGJie1R9XjIpIl0sWzAsMSwiMCJdLFswLDIsIjAiXSxbNCwxLCIwIl0sWzQsMiwiMCJdLFsxLDAsIjAiXSxbMiwwLCIwIl0sWzMsMCwiMCJdLFszLDEwXSxbNCwxMV0sWzUsMTJdLFsyLDMsImReKiIsMCx7InN0eWxlIjp7ImhlYWQiOnsibmFtZSI6ImVwaSJ9fX1dLFsxLDQsImQqIiwwLHsic3R5bGUiOnsiaGVhZCI6eyJuYW1lIjoiZXBpIn19fV0sWzAsNSwiZF4qIiwwLHsic3R5bGUiOnsiaGVhZCI6eyJuYW1lIjoiZXBpIn19fV0sWzYsM10sWzcsMl0sWzMsNCwiZF9zXioiLDAseyJzdHlsZSI6eyJ0YWlsIjp7Im5hbWUiOiJob29rIiwic2lkZSI6InRvcCJ9fX1dLFsyLDEsImRfc14qIiwwLHsic3R5bGUiOnsidGFpbCI6eyJuYW1lIjoiaG9vayIsInNpZGUiOiJ0b3AifX19XSxbNCw1LCJkX3JeKiIsMCx7InN0eWxlIjp7ImhlYWQiOnsibmFtZSI6ImVwaSJ9fX1dLFsxLDAsImRfcl4qIiwwLHsic3R5bGUiOnsiaGVhZCI6eyJuYW1lIjoiZXBpIn19fV0sWzUsOF0sWzAsOV1d
\[\begin{tikzcd}
	& 0 & 0 & 0 \\
	0 & {C^1(\Gamma, \mathbb{Z}^2)} & {C^1(\Gamma, \mathbb{R}^2)} & {C^1(\Gamma, \mathbb{T}^2)} & 0 \\
	0 & {C^0(\Gamma, \mathbb{Z}^2)} & {C^0(\Gamma, \mathbb{R}^2)} & {C^0(\Gamma, \mathbb{T}^2)} & 0
	\arrow[from=2-2, to=1-2]
	\arrow[from=2-3, to=1-3]
	\arrow[from=2-4, to=1-4]
	\arrow["{d^*}", two heads, from=3-2, to=2-2]
	\arrow["{d*}", two heads, from=3-3, to=2-3]
	\arrow["{d^*}", two heads, from=3-4, to=2-4]
	\arrow[from=2-1, to=2-2]
	\arrow[from=3-1, to=3-2]
	\arrow["{d_s^*}", hook, from=2-2, to=2-3]
	\arrow["{d_s^*}", hook, from=3-2, to=3-3]
	\arrow["{d_r^*}", two heads, from=2-3, to=2-4]
	\arrow["{d_r^*}", two heads, from=3-3, to=3-4]
	\arrow[from=2-4, to=2-5]
	\arrow[from=3-4, to=3-5]
\end{tikzcd}\]

%which gives us a corresponding long exact sequence in cohomology:
% https://q.uiver.app/#q=WzAsNixbMCwwLCJcXGRvdHMiXSxbMSwwLCJIXjAoXFxHYW1tYSwgXFxtYXRoYmJ7VH1eMikiXSxbMiwwLCJIXjEoXFxHYW1tYSwgXFxtYXRoYmJ7Wn1eMikiXSxbMywwLCJIXjEoXFxHYW1tYSwgXFxtYXRoYmJ7Un1eMikiXSxbNCwwLCJIXjEoXFxHYW1tYSwgXFxtYXRoYmJ7VH1eMikiXSxbNSwwLCIwIl0sWzAsMV0sWzEsMiwiaF4qIl0sWzIsMywiaF9zXioiXSxbMyw0LCJoX3JeKiJdLFs0LDVdXQ==
\begin{comment}
    
\[\begin{tikzcd}
	\dots & {H^0(\Gamma, \mathbb{T}^2)} & {H^1(\Gamma, \mathbb{Z}^2)} & {H^1(\Gamma, \mathbb{R}^2)} & {H^1(\Gamma, \mathbb{T}^2)} & 0
	\arrow[from=1-1, to=1-2]
	\arrow["{h^*}", from=1-2, to=1-3]
	\arrow["{h_s^*}", from=1-3, to=1-4]
	\arrow["{h_t^*}", from=1-4, to=1-5]
	\arrow[from=1-5, to=1-6]
\end{tikzcd}\]
\end{comment}

We identify $I(\Gamma)/\T^2$ with the subspace of coexact cochains in $C^1(\Gamma,\T^2)$.  Via the short exact sequence of chain complexes, we can lift every 1-cochain in $C^1(\Gamma,\T^2)$ to a 1-cochain in $C^1(\Gamma,\RR^2)$.  Since the diagram is commutative, it follows that every exact 1-cochain $\x$ in $C^1(\Gamma,\T^2)$ lifts to an exact 1-cochain $\xtilde$ in $C^1(\Gamma,\RR^2)$.  The slope conditions $S \subset C^1(\Gamma,\T^2)$ also lifts to a linear subspace $\widetilde{S}$ in $C^1(\Gamma,\RR^2)$ that contains the origin.

However, it does not follow that $\x$ satisfies the slope condition $S$ only if the lift $\xtilde$ lies in the subspace $\widetilde{S}$.  The preimage of $I(\Gamma)$ in $C^1(\Gamma,\RR^2)$ is strictly larger than the subspace of coexact 1-cochains in $C^1(\Gamma,\RR^2)$.  From the short exact sequence of complexes, we obtain a surjection
\[C^1(\Gamma,\ZZ^2) + \text{Im}(d^0_{\RR}) \longrightarrow \text{Im}(d^0_{\T^2}) \subset C^1(\Gamma,\T^2)\]
In fact, we can strengthen this to a surjection
\[\text{Coker}(d^0_{\ZZ}) \oplus \text{Im}(d^0_{\RR}) \rightarrow \text{Im}(d^0_{\T^2})\]
since $\text{Im}(d^0_{\ZZ})$ injects into $\text{Im}(d^0_{\RR})$.  Finally, we can identify $\text{Coker}(d^0_{\ZZ})$ with $H^1(\Gamma,\ZZ^2)$.  Summarizing this discussion, we have the following proposition.

\begin{proposition}
    Let $\x \in C^1(\Gamma,\T^2)$ be an exact 1-cocycle.  Then $\x$ satisfies the slope condition $S$ if and only if there exists a lift $\xtilde \in \text{Im}(d^0_{\RR})$ and a class $\rho \in H^1(\Gamma,\ZZ^2) \cong \text{Coker}(d^0_{\ZZ}) \cong \text{Im}(d^0_{\ZZ})^{\perp}$ such that
    \[\xtilde + \rho \in \widetilde{S}\]
    Moreover, the lift $\xtilde$ is unique up to an element $\ytilde \in \widetilde{S} \cap \text{Im}(d^0_{\ZZ})$ and $\rho$ is unique up to an element of $\text{Im}(d^0_{\ZZ})^{\perp} \cap \widetilde{S}$.
\end{proposition}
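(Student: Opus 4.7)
The strategy is to exploit the short exact sequence of cochain complexes displayed above, arising from the coefficient sequence $0 \to \ZZ^2 \to \RR^2 \to \T^2 \to 0$. The first task is to identify the preimage of $S$ under the reduction map $d^*_r: C^1(\Gamma, \RR^2) \to C^1(\Gamma, \T^2)$. Edge by edge, the color constraint confines $W(e)$ to a $1$-dimensional subtorus $L_e \subset \T^2$ (namely $\mathbb{S}^1 \times 0$, $0 \times \mathbb{S}^1$, or the diagonal), whose preimage in $\RR^2$ is the linear subspace $\widetilde{L}_e$ plus a rank-$1$ sublattice of $\ZZ^2$ transverse to it. Assembling over all edges, the preimage of $S$ equals $\widetilde{S} + C^1(\Gamma, \ZZ^2)$, since the component of any integer translate parallel to $\widetilde{L}_e$ is absorbed into $\widetilde{S}_e$.

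With this in hand, I would prove the \emph{only if} direction. Since $\x$ is exact, write $\x = d^0_{\T^2}\phi$; lifting $\phi$ along the surjection $C^0(\Gamma, \RR^2) \twoheadrightarrow C^0(\Gamma, \T^2)$ to some $\tilde\phi$, set $\xtilde_0 := d^0_\RR \tilde\phi \in \Ima d^0_\RR$, which projects to $\x$. By the preimage computation and $\x \in S$, there is $\tau \in C^1(\Gamma, \ZZ^2)$ with $\xtilde_0 + \tau \in \widetilde{S}$. Decompose $\tau = d^0_\ZZ \sigma + \rho$ with $\rho \in \Ima(d^0_\ZZ)^\perp$ representing the class $[\tau] \in H^1(\Gamma, \ZZ^2)$; replacing $\tilde\phi$ by $\tilde\phi + \sigma$ produces a new lift $\xtilde := \xtilde_0 + d^0_\ZZ\sigma$ still in $\Ima d^0_\RR$, still covering $\x$, and satisfying $\xtilde + \rho = \xtilde_0 + \tau \in \widetilde{S}$. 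The \emph{if} direction is then immediate from commutativity: $d^*_r$ sends $\xtilde$ to $\x$, kills the integer cochain $\rho$, and carries $\widetilde{S}$ into $S$.

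For uniqueness, if $(\xtilde, \rho)$ and $(\xtilde', \rho')$ are both valid, the difference $\xtilde - \xtilde' \in \Ima d^0_\RR$ lifts zero in $C^1(\Gamma, \T^2)$ and therefore lies in $\Ima d^0_\RR \cap C^1(\Gamma, \ZZ^2) = \Ima d^0_\ZZ$. Holding $\rho = \rho'$ fixed forces $\xtilde - \xtilde' \in \widetilde{S}$, giving the claimed freedom $\widetilde{S} \cap \Ima d^0_\ZZ$; dually, fixing $\xtilde = \xtilde'$ yields $\rho - \rho' \in \widetilde{S} \cap \Ima(d^0_\ZZ)^\perp$. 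The main technical obstacle, both here and in the existence argument, is that orthogonal projection of an integer cochain onto $\Ima(d^0_\ZZ)^\perp$ is typically not itself integer-valued; one must interpret the identification $H^1(\Gamma, \ZZ^2) \cong \Ima(d^0_\ZZ)^\perp$ as selecting preferred real representatives of integer cohomology classes and verify that the construction above is compatible with this convention.
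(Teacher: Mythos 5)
Your argument follows the same route the paper takes (the paper, in fact, offers no formal proof beyond the preceding discussion): lift through the coefficient sequence $0 \to \ZZ^2 \to \RR^2 \to \T^2 \to 0$, identify the preimage of $S$ in $C^1(\Gamma,\RR^2)$ as $\widetilde{S} + C^1(\Gamma,\ZZ^2)$, and split the integer ambiguity into a coboundary absorbed into the lift plus a residual class $\rho$. Your edge-by-edge preimage computation and the uniqueness argument are correct, and the subtlety you flag at the end is genuine and is glossed over in the paper: the lattice $C^1(\Gamma,\ZZ^2)$ does \emph{not} in general decompose as $\Ima(d^0_{\ZZ}) \oplus \bigl(\Ima(d^0_{\ZZ})^{\perp} \cap C^1(\Gamma,\ZZ^2)\bigr)$ --- the failure is measured by the critical group of $\Gamma$ --- so one cannot take $\sigma$ integer-valued and $\rho$ orthogonal simultaneously. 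The cleanest repair is to drop orthogonality at this stage: choose any splitting of $C^1(\Gamma,\ZZ^2) \twoheadrightarrow \operatorname{Coker}(d^0_{\ZZ})$ (e.g.\ integrate $\tau$ over a spanning tree to produce an integer $\sigma$), so that $\rho = \tau - d^0_{\ZZ}\sigma$ is an honest integer representative of $[\tau] \in H^1(\Gamma,\ZZ^2)$ and $\xtilde = \xtilde_0 + d^0_{\ZZ}\sigma$ genuinely covers $\x$; the identification with $\Ima(d^0_{\ZZ})^{\perp}$ should then be read as the (non-integral) orthogonal projection of these representatives, which is all the paper's later algorithm requires.
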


This proposition implies that we can decompose $\cM(\Gamma)$ into components $\cM_{\rho}(\Gamma)$, indexed by elements $\rho \in H^1(\Gamma,\ZZ^2)$, and (modulo translation in $\T^2$) each of these components is covered by an affine subspace of $C^1(\Gamma,\RR^2)$.

\begin{proposition}
    Let $\rho \in H^1(\Gamma,\ZZ^2) \cong \text{Im}(d^0_{\ZZ})^{\perp}$.  There is a surjective map
\[ \T^2 \oplus \left( \left( \rho + \text{Im}(d^0_{\RR}) \right) \cap \widetilde{S} \right) \rightarrow \cM_{\rho}(\Gamma) \]
that restricts to a covering map
\[\left( \T^2 \oplus \left( \left( \rho + \text{Im}(d^0_{\RR}) \right) \cap \widetilde{S} \right) \right) \setminus \widetilde{D} \rightarrow \cM_{\rho}(\Gamma)\]
where $\widetilde{D}$ denotes the preimage in $C^*(\Gamma,\RR^2)$ of the degeneracy locus $D \subset C^*(\Gamma,\T^2)$.
\end{proposition}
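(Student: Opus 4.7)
The plan is to produce the map explicitly by reducing $\RR^2$-valued cochains modulo $\ZZ^2$, deduce surjectivity from the previous proposition, and identify the covering fibers with a canonical discrete subgroup.

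First I would define the map by $(t,\xtilde + \rho) \mapsto (t,\xtilde \bmod \ZZ^2)$. Well-definedness rests on three observations: since $\rho \in C^1(\Gamma,\ZZ^2)$ reduces to zero in $C^1(\Gamma,\T^2)$, the image simplifies as claimed; exactness of $\xtilde \bmod \ZZ^2$ follows from $\xtilde \in \text{Im}(d^0_\RR)$ together with surjectivity of the reduction $C^0(\Gamma,\RR^2) \twoheadrightarrow C^0(\Gamma,\T^2)$; and the slope condition transfers from $\widetilde S$ down to $S$ because $\widetilde S$ is the pullback of $S$ under reduction. Surjectivity onto $\cM_\rho(\Gamma)$ then follows from the previous proposition: every slope-satisfying exact cochain in the $\rho$-component admits a lift of the required form $\xtilde + \rho$, while the translation factor $\T^2$ lifts trivially under the identity map $\T^2 \to \T^2$.

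Next I would compute the fibers. If $(t,\xtilde + \rho)$ and $(t',\xtilde' + \rho)$ share an image, then $t = t'$ and $\xtilde - \xtilde' \in \text{Im}(d^0_\RR)$ reduces to zero in $C^1(\Gamma,\T^2)$, so $\xtilde - \xtilde' \in \text{Im}(d^0_\RR) \cap C^1(\Gamma,\ZZ^2)$. A short argument — any real $0$-cochain whose coboundary is integer-valued differs from an integer $0$-cochain by a function that is constant on each component of $\Gamma$, and such locally constant functions lie in the kernel of $d^0$ — shows this intersection equals $\text{Im}(d^0_\ZZ)$. Combined with $\xtilde - \xtilde' \in \widetilde S$, the fiber is a coset of the discrete subgroup $\Lambda_\rho := \widetilde S \cap \text{Im}(d^0_\ZZ)$ acting by translation on the affine slice.

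Finally, to upgrade the restriction to a covering map, I would observe that $\Lambda_\rho$ is a discrete subgroup of the real vector space $\text{Im}(d^0_\RR)$ and hence acts freely and properly discontinuously on the affine slice by translation, trivially on the $\T^2$-factor. Removing $\widetilde D$ from the source ensures the image avoids the degeneracy locus $D$, so it lands in $\cM_\rho(\Gamma)$ as defined through \autoref{prop:main-moduli-description}, and on this restricted source the fibers are uniform copies of $\Lambda_\rho$, which is the defining property of a covering. The main obstacle I anticipate is the fiber identification — specifically the equality $\text{Im}(d^0_\RR) \cap C^1(\Gamma,\ZZ^2) = \text{Im}(d^0_\ZZ)$ in the possibly disconnected case — but once this is settled, the rest is routine covering space theory for a free, discrete translation action.
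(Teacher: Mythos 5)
Your proof is correct and follows the same route the paper intends: the paper states this proposition without a separate proof, deriving it from the preceding lift proposition, and your argument — reduction mod $\ZZ^2$, surjectivity from that proposition, and identification of the fibers with cosets of the discrete group $\widetilde{S} \cap \operatorname{Im}(d^0_{\ZZ})$ via the equality $\operatorname{Im}(d^0_{\RR}) \cap C^1(\Gamma,\ZZ^2) = \operatorname{Im}(d^0_{\ZZ})$ — supplies exactly the details the paper leaves implicit. The one imprecision is the claim that $\widetilde{S}$ is the pullback of $S$ under reduction (the full preimage is the union of integer translates $\widetilde{S} + C^1(\Gamma,\ZZ^2)$, which is why the deck group appears at all), but you only use the forward inclusion $\widetilde{S} \bmod \ZZ^2 \subseteq S$, which does hold.
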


To summarize, we can efficiently compute $\cM(\Gamma)/\T^2$ as follows:
\begin{enumerate}
    \item The maps $d^0_{\ZZ}, d^0_{\RR}$ are essentially the adjacency matrix for the abstract graph $\Gamma$.  
    \item Choose an element 
    \[\rho \in \text{Coker}(d^0_{\ZZ}) \cong \text{Im}(d^0_{\ZZ})^{\perp}\]
    and view it as an element of $C^1(\Gamma,\RR^2)$.
    \item The slope restrictions $\widetilde{S}$ are determined by the Tait-coloring of $\Gamma$ and the assignment of slopes in $\RR^2$ to edge colors.
    \item We then have
    \[\cM_{\rho}(\Gamma)/\T^2 = \left( \rho + \text{Im}(d^*_{\RR}) \right) \cap \widetilde{S}\]
    \item Letting $\rho$ vary in $H^1(\Gamma,\ZZ^2) \cong \text{Im}(d^*_{\ZZ})^{\perp}$ yields the entire space $\cM(\Gamma)/\T^2$.
\end{enumerate}

In general, this method may overcount triple grid diagrams but this indeterminacy is exactly determined by the discrete subset $\widetilde{S} \cap C^1(\Gamma,\ZZ^2)$.

\printbibliography[title={References Cited}]

\end{document}